\newtheorem{theorem}{Theorem}
\newtheorem{lemma}[theorem]{Lemma}
\newtheorem{proposition}[theorem]{Proposition}
\newtheorem{conjecture}[theorem]{Conjecture}
\newtheorem{question}[theorem]{Question}
\title{On near superperfect numbers, the Goormaghtigh conjecture, and Mertens' theorem} 
\author{Satvik Beri\footnote{satvik.beri@gmail.com} , Joshua Zelinsky\footnote{Department of Mathematics, Hopkins School, New Haven, CT. USA \\ \indent jzelinsky@hopkins.edu}}
\date{}
\begin{document}

\maketitle

\begin{abstract} Let $\sigma(n)$ be the sum of the divisors of $n$. Kalita and Saikia defined a number $n$ to be near superperfect if $2n+d=\sigma(\sigma(n))$ for some positive divisor $d$ of $n$.  We extend some of their results about near superperfect numbers and connect these results to the Goormaghtigh conjecture and to certain products of primes similar to those which appear in Mertens' theorem. We also define type II near superperfect numbers, which are those $n$ satisfying $2n+d=\sigma(\sigma(n))$ for some positive divisor $d$ of $\sigma(n)$, and prove analogous results about these numbers.
\end{abstract}

\section{Introduction}

Let $\sigma(n)$ be the sum of the positive divisors of $n$. A number is said to be \textit{perfect} if $n$ satisfies the equation $\sigma(n) =2n$. A number is said to be \textit{abundant} if $\sigma(n) > 2n$ and is said to be \textit{deficient} if $\sigma(n) < 2n$. Two of the oldest unsolved problems in mathematics are whether there are infinitely many even perfect numbers and whether there are any odd perfect numbers. A wide variety of work on the second problem has occurred in the last few years. 

For example, Acquaah and Konyagin \cite{AK} showed that if $n$ is an odd perfect number, then its largest prime factor is at most $(3n)^{\frac{1}{3}}$. Two subsequent papers, one by the second author and the other by the second author with Bibby and Vyncke \cite{JZ second, BVZ},  proved similar upper bounds for the second and third largest prime factor.  In terms of lower bounds, Goto and Ohno \cite{Goto and Ohno} proved that the largest prime factor of such a number must be at least $10^8$. Similar lower bounds for the second and third largest prime factor are due to Iannucci \cite{Iannucci second, Iannucci third}.

There are also similar bounds for the size of such an $n$ in general, with Ochem and Rao \cite{Ochem Rao} showing that one must have $n< 10^{1500}$. Ochem in as-yet unpublished work extended this lower bound to $10^{2000}$ but pushing this bound further will be difficult without substantial new insights. Upper bounds for $n$ in terms of its number of prime factors have also been proven. The first such bound, which grows rapidly, was given by Pomerance \cite{Pomerance}. This was subsequently greatly reduced by Heath-Brown \cite{Heath-Brown} who showed that if $n$ is an odd perfect number with $k$ distinct prime factors then we must have

$$n < 4^{4^k}.$$

Subsequent papers by Cook \cite{Cook}, Nielsen \cite{Nielsen upper}, and finally by Chen and Tang \cite{Chen and Tang} have reduced this to $$n < 2^{4^k -2^k}.$$

A recently investigated bound is of the following type. Assume that $n$ is an odd perfect number where $n= p_1^{a_1}p_2^{a_2}\cdots p_k^{a_k}$ where the $p_i$ are distinct primes. Then let $\Omega(n)= a_1 + a_2 \cdots +a_k$. That is, $\Omega(n)$ is the total number of prime factors of $n$, counting repetitions. 

 Ochem and Rao \cite{OchemRao1}  have proved that $N$ must satisfy \begin{equation}\Omega(N) \geq \frac{18k -31}{7}\label{OR1}\end{equation} and \begin{equation}\Omega(N) \geq 2k +51.\label{OR2} \end{equation}

This bound was subsequently improved by the second author in a pair of papers \cite{Zelinsky1, Zelinskybig} which were then surpassed by a paper by Clayton and Hansen \cite{Clayton and Hansen} who proved that $$\Omega(n) \geq \frac{99}{37}k -\frac{187}{37},$$ and that if $3 \nmid n$, then one has the stronger result that

$$\Omega(n) \geq \frac{51}{19}k - \frac{46}{19}. $$
 
A recent direction has been to look at what Dris and Luca \cite{Dris and Luca} called the {\emph{index}} of a prime power of $n$. In particular, assume that $p^a||n$ (that is, that $p^a|n$ and $p^{a+1} \nmid n$). Define the index of $p^a$ with respect to $n$ to be $\sigma(n/p^a)/p^a$. They showed that for any such $n$ and $p^a$, the index is greater than 5. This was greatly improved by Chen and Chen in a pair of papers \cite{Chen and Chen1, Chen and Chen2} who showed among other results that the index is never of any of the forms $q_1^i$ where $q_1$ is prime and $1 i \leq 4$. They also similarly ruled out the index being $q_1q_2$ or $q_1^2q_2$ for two distinct primes, along with 24 similar forms. This was then improved further by Gallardo \cite{Gallardo} who showed how to rule out many additional forms.  

Despite these restrictions and many other results, we appear to be not anywhere close to proving the non-existence of odd perfect numbers. Not surprisingly then, many have tried to branch out to look at closely related problems, and a large number of variations on the idea of perfect numbers have been proposed.

Pollack and Shevelev \cite{PS} defined a number $n$ to be \textit{near perfect} if it satisfies  $2n+d=\sigma(n))$ for some $d >0$ where $d|n$. The $d$ in question is referred to as the omitted divisor or redundant divisor since one has that $2n=\sigma(n)$ if one omits $d$ from the sum of the divisors of $n$.  A variety of results concerning near perfect numbers have recently been proven.  In their original paper, Pollack and Shevelev constructed three families of near perfect numbers: 

\begin{enumerate}
\item $2^{t-1}(2^t-2^k-1)$ where $2^t-2^k-1$ is prime. Here $2^k$ is the omitted divisor.
\item $2^{2p-1}(2^p-1)$ where $2^p-1$ is prime. Here $2^p(2^p-1)$ is the omitted divisor.
\item $2^{p-1}(2^p-1)^2$ where $2^p-1$ is prime. Here $2^p-1$ is the omitted divisor.
\end{enumerate}

Subsequent work by Ren and Chen \cite{RC} showed that all near perfects with two distinct prime factors  must be either 40 or a member of one of Pollack and Shevelev's three families. 

Hasanalizade \cite{Hasanalizade} proved that there are no odd near   perfect Fibonacci or Lucas numbers, and that there are no near perfect Fibonacci numbers with fewer than three distinct prime factors. Tang, Ma, and Feng \cite{TMF} showed that  $173369889=(3^4)(7^2)(11^2)(19^2)$  is the only odd near perfect number with four or fewer distinct prime divisors; currently this is the only odd near perfect number known. 

Similarly, Pollack and Shevelev defined a number to be $s$-\textit{near perfect} if $$\sigma(n) =2n + d_1 + d_2 \cdots d_s$$ where $d_1, d_2, \cdots d_s$ are distinct positive divisors of $n$. Note that when $s=1$ these are just the usual near perfect numbers. Aryan, Madhavani, Parikh, Slattery, and the second author of this paper, in a recent paper \cite{AMPSZ} classified all $2$-near perfect numbers of the form $2^kp$ or $2^k p^2$ where $p$ is an odd prime. In particular, they proved the following two theorems.

\begin{theorem} Assume $n$ is a $2$-near perfect number with omitted divisors $d_1$ and $d_2$. Assume further that $n=2^k p$ where $p$ is prime and $k$ is a positive integer. Then one must have, without loss of generality, one of four situations. \label{classification of 2 near perfect of form power of two times a prime}
\begin{enumerate}
    \item $p=2^k-1$. Here we have $d_1=1$ and $d_2=p$.
    \item $p=2^{k+1} -2^a -2^b-1$ for some $a, b \in \mathbb{N}$. Here  $d_1=2^a$ and $d_2=2^b$.
    \item $p=\frac{2^{k+1}-2^a-1}{1+2^b}$ for some $a, b \in \mathbb{N}$. Here  $d_1 = 2^a$ and $d_2 = 2^bp$. 
    \item $p=\frac{2^{k+1}-1}{1+2^a+2^b}$ for some $a, b \in \mathbb{N}$. Here $d_1 =2^ap$ and $d_2=2^bp$.
\end{enumerate}
\end{theorem}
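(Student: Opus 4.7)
The strategy is to compute $\sigma(n)-2n$ in closed form, reducing the $2$-near perfect equation to a single linear constraint on $d_1+d_2$, and then to run a three-way case analysis on how many of $d_1,d_2$ are divisible by $p$.

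First, since $n=2^kp$ with $p$ an odd prime, multiplicativity of $\sigma$ gives $\sigma(n)=(2^{k+1}-1)(p+1)$, so
\[\sigma(n)-2n = (2^{k+1}-1)(p+1) - 2^{k+1}p = 2^{k+1}-1-p.\]
Hence $n$ is $2$-near perfect with omitted divisors $d_1\neq d_2$ if and only if $d_1+d_2 = 2^{k+1}-1-p$ and both $d_i$ divide $n$. (If $p=2$ the right side is negative, so no solutions exist; we may therefore assume $p$ is odd.)

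Second, every divisor of $n$ has the form $2^a$ or $2^ap$ for some $0\le a\le k$, so I split into three cases by the $p$-multiplicity of $\{d_1,d_2\}$. If neither $d_i$ contains $p$, write $d_1=2^a,\ d_2=2^b$ with $a\neq b$ and rearrange to obtain $p=2^{k+1}-2^a-2^b-1$, which is situation~(2). If exactly one contains $p$, take without loss of generality $d_1=2^a$ and $d_2=2^bp$; rearranging gives $p(1+2^b)=2^{k+1}-2^a-1$, i.e.\ $p=\frac{2^{k+1}-2^a-1}{1+2^b}$, which is situation~(3), and the subcase $a=b=0$ further specialises to $p=2^k-1$ with $d_1=1,\ d_2=p$, recorded separately as situation~(1). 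If both contain $p$, write $d_1=2^ap,\ d_2=2^bp$ with $a\neq b$ and rearrange to get $p=\frac{2^{k+1}-1}{1+2^a+2^b}$, yielding situation~(4).

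I expect no serious obstacle: the argument is a direct $\sigma$-computation followed by an exhaustive split on the three possible values of the $p$-multiplicity in $\{d_1,d_2\}$. The only points to watch are the distinctness constraint $d_1\neq d_2$ (forcing $a\neq b$ whenever both divisors lie in the same family), the overlap between situations~(1) and~(3) at $a=b=0$, and the fact that the ``without loss of generality'' in the statement is what absorbs the swap of $d_1$ and $d_2$ in the mixed case.
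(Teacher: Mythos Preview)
Your argument is correct. The computation $\sigma(2^kp)-2\cdot 2^kp = 2^{k+1}-1-p$ (for $p$ odd) is right, and the three-way split on how many of $d_1,d_2$ carry the factor $p$ exhausts all divisor pairs of $2^kp$; the algebra in each case is routine and yields exactly the four listed situations, with situation~(1) falling out as the $a=b=0$ instance of situation~(3).

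Note, however, that this theorem is not proved in the present paper: it is quoted from \cite{AMPSZ} as background, so there is no in-paper proof to compare against. Your approach is the natural one and is essentially what any direct proof must do, since the only structural freedom in the problem is the $p$-content of the two omitted divisors. The small caveats you flag---the distinctness constraint $d_1\neq d_2$, the redundancy between (1) and (3), and the use of ``without loss of generality'' to absorb the swap in the mixed case---are exactly the points that need attention, and you have handled them correctly.
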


\begin{theorem} Assume that $n$ is a $2$-near perfect number with omitted divisors $d_1$ and $d_2$. Assume further that $n=2^kp^2$ where $p$ is prime. Then $n \in \{18, 36,200\}$. \label{Second main result power of 2 times square of a prime}
\end{theorem}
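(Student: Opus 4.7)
The plan is to rewrite the defining equation explicitly and then run a finite case analysis. Since $n = 2^k p^2$, we have $\sigma(n) = (2^{k+1}-1)(p^2+p+1)$ and $2n = 2^{k+1} p^2$, so the hypothesis $\sigma(n) = 2n + d_1 + d_2$ is equivalent to
\begin{equation*}
d_1 + d_2 \;=\; (2^{k+1}-1)(p+1) - p^2. \qquad (\star)
\end{equation*}
The edge case $p=2$ gives $n = 2^{k+2}$, for which $\sigma(n) - 2n = -1 < 0$, so $p$ may be assumed odd. Every divisor of $n$ then has the form $2^a p^b$ with $0 \le a \le k$ and $0 \le b \le 2$, so writing $d_i = 2^{a_i} p^{b_i}$, the argument naturally splits into six cases according to the unordered pair $\{b_1,b_2\} \in \{\{0,0\},\{0,1\},\{0,2\},\{1,1\},\{1,2\},\{2,2\}\}$.

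In each case I would plug into $(\star)$ and combine three tools: reduction modulo $p$, which forces a congruence between $2^{a_1} \pm 2^{a_2}$ and $2^{k+1}-1$; 2-adic valuations, using that $p^2+p+1$ is odd while $p+1$ is even, which pins down at least one $a_i$; and size comparisons, using $d_i \le n$. In the pure power-of-two case $\{0,0\}$, $(\star)$ gives $(2^{k+1}-1)(p+1) - p^2 \le 2^k + 2^{k-1}$, which rearranges to $2^{k-1}(4p+1) \le p^2+p+1$ and forces $k \le 1$; the sole surviving instance $k=1, p=3$ yields $n=18$. In $\{0,1\}$ and $\{1,1\}$, the congruence modulo $p$ combined with size bounds eliminates all solutions. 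In $\{2,2\}$, $(\star)$ forces $p^2 \mid (2^{k+1}-1)(p+1)$, and since $\gcd(p,p+1)=1$ we obtain $p^2 \mid 2^{k+1}-1$; writing $2^{k+1}-1 = p^2 m$, the equation collapses to $2^{a_1}+2^{a_2} = (p+1)m - 1$, which I expect to rule out by combining the resulting multiplicative order condition (the order of $2$ modulo $p^2$ divides $k+1$) with the bounds $a_i \le k$.

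The main obstacle will be the two mixed cases $\{0,2\}$ and $\{1,2\}$, which contain the remaining examples $n = 36$ and $n = 200$. In $\{0,2\}$, $(\star)$ becomes $2^{a_1} + (2^{a_2}+1)p^2 = (2^{k+1}-1)(p+1)$; a parity check shows that either $a_1 = 0$ and $a_2 \ge 1$, or $a_1 \ge 1$ and $a_2 = 0$. In each sub-case, reduction modulo $p$ (or $p^2$) pins down the 2-adic data, and a size bound of the shape $(2^{a_2}+1)p^2 \le (2^{k+1}-1)(p+1)$ leaves only a small finite set of triples $(k,p,a_2)$ to check, producing only $(k,p,a_1,a_2)=(2,3,0,1)$ and hence $n=36$. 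In $\{1,2\}$, factoring $p$ out of $(\star)$ and using $\gcd(p,p+1)=1$ yields $p \mid 2^{k+1}-1$; writing $2^{k+1}-1 = pm$, the equation reduces to $2^{a_1} = m + (m-1-2^{a_2})p$, and demanding the right-hand side be a positive power of $2$ bounded by $2^k$ pins down $(k,p)=(3,5)$ and $n=200$. The real difficulty is the bookkeeping in these mixed cases, not any single clever estimate; once carried out carefully, precisely the three claimed solutions emerge.
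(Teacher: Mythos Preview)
This theorem is not proved in the present paper at all: it is quoted, together with Theorem~\ref{classification of 2 near perfect of form power of two times a prime}, as a result of Aryan, Madhavani, Parikh, Slattery, and Zelinsky \cite{AMPSZ}, and no argument is supplied here. So there is no ``paper's own proof'' to compare against; the comparison you presumably want is with the proof in \cite{AMPSZ}.

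Your outline is the natural attack and almost certainly agrees in spirit with what is done in \cite{AMPSZ}: write $d_i = 2^{a_i}p^{b_i}$ and split into the six cases for $\{b_1,b_2\}$. The identity $(\star)$ is correct, and the identifications of $18$, $36$, $200$ with the cases $\{0,0\}$, $\{0,2\}$, $\{1,2\}$ are right. However, what you have written is a plan, not a proof, and several of the steps as stated do not go through without further input. In the $\{0,0\}$ case, the inequality $2^{k-1}(4p+1)\le p^2+p+1$ does \emph{not} by itself force $k\le 1$ (for instance it allows $k=2$ when $p=11$); you need to pair it with the positivity constraint $(2^{k+1}-1)(p+1)>p^2$ to squeeze out a finite search. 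The cases $\{0,1\}$ and $\{1,1\}$ are dismissed in a single clause with no argument. Most seriously, in the $\{2,2\}$ case you correctly reduce to $p^2\mid 2^{k+1}-1$ and $2^{a_1}+2^{a_2}=m(p+1)-1$ with $m=(2^{k+1}-1)/p^2$, but then say only that you ``expect to rule out'' solutions via an order condition; this is exactly the step that requires a real argument, since for each $p$ there are infinitely many admissible $k$, and you have to show none of the resulting values $m(p+1)-1$ has binary weight two within the range $a_i\le k$. Until those cases are actually executed, this is a credible sketch rather than a proof.
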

 Cohen, Cordwell, Epstein, Kwan, Lott, and Miller \cite{Miller group} gave strict bounds on the number of $s$-near perfect numbers under a given $x$ as long as $s$ is fixed and at least four. Currently, the behavior of $3$-near perfect numbers seems to be poorly understood. When $k=1$ and $k=2$, the small number of divisors gives little flexibility with what $k$-near perfect numbers can look like, and for $s \geq 4$ there is enough flexibility to say without any conditions that there are many $s$-near perfect numbers. However, $s=3$ seems to be a middle ground of difficulty, having neither the simplicity of small $s$ nor the high flexibility of large $s$.

A thematically related idea was investigated by Davis, Klyve, and Kraght \cite{Davis Klyve Kraght} who studied the image of the function $e(n) = \sigma(n)-2n $, which they termed the \textit{excedent} function, although another common term for this quantity is the abundance of $n$.\footnote{The etymology behind this term is notable: Their term ``excedent'' is based on the Italian word  \textit{eccedenza} used by Cattaneo who seems to have been the first person to introduce the function in \cite{Cattaneo}.} Among other results, they showed that every Mersenne prime is in the image of $e(n)$, using the same construction of Pollack and Shevelev for their third family of near perfect numbers. They also showed that aside from a set of density zero (which may in fact be the empty set) if $n$ is of the form $n \equiv 12$ (mod 24), then $n$ is in the image of $e(n).$  Davis, Klyve and Kraght conjectured that every even integer is in the range of $e(n)$.  Related work  was then done by a paper by Dean, Erdman, Klyve, Lycette,  Pidde,  and Wheel \cite{DKELPW}. Note that the abundance $e(n)$ should not be confused with the abundancy index or just abundancy $h(n)= \frac{\sigma(n)}{n}$ which is in many respects a better behaved function and is the jumping off point for much research on odd perfect numbers. Most of the advantages of $h(n)$ over $e(n)$ stem from $h(n)$ being a multiplicative function. 
 
In a different direction, Suryanarayana \cite{Suryanarayana} defined a number $n$ to be \textit{superperfect} if $n$ satisfies $2n = \sigma(\sigma(n))$. An even number $n$ is superperfect exactly when $n=2^k$ and $2^{k+1}-1$ is a Mersenne prime. It is an open problem if there are any odd superperfect numbers. Kanold \cite{Kanold} proved that any odd superperfect number must be a perfect square. Yamada \cite{Yamada superperfect} proved that there are only finitely many odd superperfect numbers with $k$ distinct prime divisors. Suryanarayana \cite{Suryanarayana 1973} proved that there are no odd superperfect numbers which are a power of a prime.

In a recent paper \cite{Kalita and Saikia}, Kalita and Saikia hybridized the definition of near perfect with the definition of superperfect. They defined a number $n$ to be \textit{near superperfect} if $2n+d=\sigma(\sigma(n))$ for some positive divisor $d$ of $n$. The primary focus of this paper will be extending the results of their paper as well as suggesting additional hybrid definitions.   

Kalita and Saikia proved that $n$ is near superperfect when $n$ is a Mersenne prime. They also found two other near superperfect numbers: $n=8$ and $n=512$. They asked if all near superperfect numbers are Mersenne primes or powers of 2, and also asked if 8 and 512 were the only powers of 2 which are near superperfect. Kalita and Saikia also proved that there is no near superperfect number of the form $n=p^a$ for prime $p$ when $\sigma(p^a)=p^a + p^{a-1} \cdots +1$ is prime. Our results will focus on extending their results about odd near superperfect numbers as well as some related results.

We will say a number $n$ is a Kalita-Saikia number if it satisfies that $\sigma(p^a)$ is prime whenever $p^a||n$. (Here $p^a||n$ means $p^a|n$ and $p^{a+1}\nmid n$.) Kalita-Saikia numbers which are near superperfect  turn out to be a particular nice family of numbers to work with, due to   $\sigma(\sigma(n)$ being much easier to characterize than it is for other numbers. The first few Kalita-Saikia numbers are :

$$1,2, 4, 9, 16, 18, 25, 36, 50, 64, 100, 144, 225, 289, 400, 450, 576, 578, 729, 900, 1156.$$

The sequence of Kalita-Saikia numbers is not currently in the OEIS.\\

Our main result is the following generalization of their result.

\begin{theorem} \label{strong version of KS1} Suppose that $n$ is odd and $n=p_1^{a_1} \cdots p_k^{a_k}$ for distinct primes $p_1 < \cdots < p_k$. Suppose further that for any $i$ where $1 \leq i \leq k$, 
$\sigma(p_i^{a_i})$ is prime. Finally, assume that $n$ is a near superperfect number. 

Given the above assumptions, we have $k \geq 3$. Moreover, if $k=3$, $k+3$ or $k=5$, then we must have $p_1=3$. 
\end{theorem}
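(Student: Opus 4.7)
The plan is to exploit the Kalita-Saikia hypothesis to make $\sigma(\sigma(n))$ essentially explicit, and then to force a contradiction in the forbidden cases by a sharp upper bound on $\sigma(\sigma(n))/n$ that falls below $2$. Set $q_i := \sigma(p_i^{a_i})$, which is prime by hypothesis, so that $\sigma(n) = q_1 q_2 \cdots q_k$ by multiplicativity of $\sigma$. Each $p_i$ is odd, and an odd exponent $a_i = 2m+1$ would yield the factorization $\sigma(p_i^{a_i}) = (1+p_i)(1 + p_i^2 + \cdots + p_i^{2m})$ with both factors exceeding $1$, contradicting primality; hence each $a_i$ is even and in particular $a_i \geq 2$. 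Allowing coincidences among the $q_i$, the elementary inequality $\sigma(r^b) \leq (r+1)^b$ (valid for any prime $r$ and $b \geq 1$, with equality iff $b=1$) gives
\[
\sigma(\sigma(n)) \leq \prod_{i=1}^{k}(q_i+1),
\]
while the near superperfect condition $\sigma(\sigma(n)) = 2n + d$ with $d \geq 1$ forces $\sigma(\sigma(n))/n > 2$. It therefore suffices to exhibit an upper bound on $\prod_i (q_i+1)/p_i^{a_i}$ that is below $2$.

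The core estimate is the algebraic identity
\[
\frac{q_i + 1}{p_i^{a_i}} \;=\; \frac{p_i}{p_i-1} + \frac{p_i-2}{(p_i-1)\,p_i^{a_i}},
\]
which for $p_i \geq 3$ is strictly decreasing in $a_i$. Combined with $a_i \geq 2$, this yields $(q_i+1)/p_i^{a_i} \leq M(p_i)$, where
\[
M(p) \;:=\; \frac{p^3 + p - 2}{(p-1)\,p^2}.
\]
One computes $M(3) = 14/9$, $M(5) = 32/25$, $M(7) = 58/49$, $M(11) = 134/121$, $M(13) = 184/169$, and $M(17) = 308/289$. Since $M$ is strictly decreasing in $p$, for distinct odd primes $p_1 < \cdots < p_k$ the product $\prod_i M(p_i)$ is maximized by taking the $p_i$ to be the consecutive odd primes starting at the minimum allowed value of $p_1$.

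It then remains to verify the numerical bounds. For $k=1$, $M(3) = 14/9 < 2$; for $k=2$, the worst case is $M(3)\,M(5) = 448/225 < 2$. This forces $k \geq 3$. For $k \in \{3,4,5\}$ with $p_1 \geq 5$ the relevant worst-case products are
\[
M(5)M(7)M(11) \approx 1.68,\qquad M(5)M(7)M(11)M(13) \approx 1.83,\qquad M(5)M(7)M(11)M(13)M(17) \approx 1.95,
\]
all strictly below $2$, so $p_1 \geq 5$ is impossible and hence $p_1 = 3$. The principal obstacle is the tightness of the last product: $M(5)M(7)M(11)M(13)M(17)$ is only about $1.947$, so one must use the exact form of $M(p)$ rather than a coarser surrogate such as $p/(p-1) + 1/p^2$ (which would overshoot $2$ already at $k=2$). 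This same tightness explains why the theorem stops at $k=5$, since appending $M(19) = 382/361$ would push the product past $2$, and the method cannot extend further without additional ideas.
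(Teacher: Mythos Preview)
Your proof is correct and follows the same overall strategy as the paper (bound $\sigma(\sigma(n))/n$ above by a product over the prime divisors and show the product is $<2$), but your per-prime bound is genuinely sharper and this buys you a real simplification.

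The paper's Lemma~4 bounds each factor by $\dfrac{p}{p-1}+\dfrac{1}{p^2}$. With that bound the product for $\{3,5\}$ is $\tfrac{29}{18}\cdot\tfrac{129}{100}\approx 2.078>2$, so the paper cannot dispose of $k=2$ by the product inequality alone; it must first force $p_1=3$, $p_2=5$, and then run a separate case analysis on $n=3^a5^b$ (checking $a=2$ with small $b$, then using $a\geq 6$, etc.). Your bound $M(p)=\dfrac{p^3+p-2}{(p-1)p^2}=1+\dfrac{1}{p}+\dfrac{2}{p^2}$ is strictly smaller (since $\tfrac{1}{p-1}-\tfrac{1}{p}>\tfrac{1}{p^2}$), and it is just tight enough that $M(3)M(5)=\tfrac{448}{225}<2$, which kills $k\leq 2$ in one line with no case work. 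For $3\leq k\leq 5$ with $p_1\geq 5$ both bounds succeed, yielding the same conclusion $p_1=3$. Your closing remark about why the method stops at $k=5$ (since $M(5)\cdots M(17)\approx 1.947$ but appending $M(19)$ exceeds $2$) is also accurate and matches the paper's implicit limitation. One small stylistic point: the monotonicity of $M$ is asserted; the factorization $M(p)=1+\tfrac{1}{p}+\tfrac{2}{p^2}$ makes it immediate and would be worth including.
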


The second section of this paper proves this theorem as well as related results, including connections to the  the Goormaghtigh conjecture and Mertens' theorem.

The third section of this paper discusses improved algorithms for searching for near superperfect numbers. The fourth section discusses variants of the idea of near superperfect numbers.

\section{Extending Kalita and Saikia's results for odd near superperfect numbers}

To prove Theorem \ref{strong version of KS1}, we will prove the following Lemma.
\begin{lemma} Assume that $n$ is an odd positive integer where $n=p_1^{a_1} \cdots p_k^{a_k}$ and $p_1 < \cdots p_k$ are prime. Assume further  that for any $i$ where $1 \leq i \leq k$, 
$\sigma(p_i^{a_i})$ is prime. Then 
$$\frac{\sigma(\sigma(n))}{n} \leq \left(\frac{p_1}{p_1-1} + \frac{1}{p_1^2}\right) \cdots \left(\frac{p_k}{p_k-1} +\frac{1}{p_k^2} \right).$$ \label{Modified abundance lemma}    
\end{lemma}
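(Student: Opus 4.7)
The plan is to express everything multiplicatively and bound $\sigma(\sigma(n))$ factor by factor, with care taken because the prime values $q_i := \sigma(p_i^{a_i})$ need not all be distinct.

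First I would establish that every $a_i$ is even, so in particular $a_i \geq 2$. Since $n$ is odd each $p_i$ is odd, which makes $\sigma(p_i^{a_i}) = 1 + p_i + \cdots + p_i^{a_i}$ a sum of $a_i+1$ odd terms. If $a_i$ were odd this sum would be even and at least $1 + p_i \geq 4$, contradicting primality. This parity observation is what later lets me replace $1/p_i^{a_i}$ by $1/p_i^2$.

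Next, since $\sigma$ is multiplicative, $\sigma(n) = \prod_i q_i$. Grouping equal values, I write $\sigma(n) = \prod_j r_j^{m_j}$ with the $r_j$ distinct primes, so $\sigma(\sigma(n)) = \prod_j \sigma(r_j^{m_j})$. The key step is the inequality $\sigma(r^m) \leq (r+1)^m$, which follows from the binomial expansion $(r+1)^m = \sum_{\ell=0}^m \binom{m}{\ell} r^\ell \geq \sum_{\ell=0}^m r^\ell = \sigma(r^m)$. Multiplying over $j$ and regrouping by $i$ gives
$$\sigma(\sigma(n)) \leq \prod_j (r_j + 1)^{m_j} = \prod_i (q_i + 1) = \prod_i \bigl(\sigma(p_i^{a_i}) + 1\bigr).$$

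Finally, I divide by $n = \prod_i p_i^{a_i}$ and split each factor:
$$\frac{\sigma(\sigma(n))}{n} \leq \prod_i \frac{\sigma(p_i^{a_i}) + 1}{p_i^{a_i}} = \prod_i \left( \frac{\sigma(p_i^{a_i})}{p_i^{a_i}} + \frac{1}{p_i^{a_i}} \right) \leq \prod_i \left( \frac{p_i}{p_i-1} + \frac{1}{p_i^2} \right),$$
using the geometric bound $\sigma(p_i^{a_i})/p_i^{a_i} = \sum_{k=0}^{a_i} p_i^{-k} < p_i/(p_i-1)$ together with $a_i \geq 2$. The main obstacle is the bookkeeping when the $q_i$ collide: the naive identity $\sigma(\sigma(n)) = \prod_i (q_i + 1)$ only holds when the $q_i$ are pairwise distinct, and the inequality $\sigma(r^m) \leq (r+1)^m$ is exactly what lets the argument go through in general.
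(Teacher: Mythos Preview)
Your proof is correct and follows essentially the same route as the paper: establish $a_i\ge 2$ from parity, bound each factor $(\sigma(p_i^{a_i})+1)/p_i^{a_i}$ by $p_i/(p_i-1)+1/p_i^2$, and combine multiplicatively. In fact you are slightly more careful than the paper on the collision issue, supplying the explicit inequality $\sigma(r^m)\le (r+1)^m$ where the paper merely asserts that repeated primes among the $q_i$ can only decrease $\sigma(\sigma(n))$.
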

\begin{proof} Assume that $n$ is an odd positive integer satisfying the assumptions in the Lemma. Note that in order for $\sigma(p_i^{a_i})$ to be prime when $p_i$ is odd, $\sigma(p_i^{a_i})$ must be odd, and therefore $a_i$ is even. Thus $a_i \geq 2$ for all $i$. We now note that

\begin{equation}\begin{split}\frac{\sigma(\sigma(p_i^{a_i}))}{p_i^{a_i}} & = \frac{\sigma(\frac{p_i^{a_i+1}-1}{p_i-1})}{p_i^{a_i}} \\ &= \frac{\frac{p_i^{a_i+1}-1}{p_i-1}+1}{p_i^{a_i}} = \frac{p_i}{p_i-1} + \frac{1}{p_i^2}  -\left(\frac{1}{p_i^{a_i}(p_i-1)} - \frac{1}{p_i^{a_i}} + \frac{1}{p_i^2} \right).\end{split}\end{equation}
Since $a_i$ is at least 2, we have that $$\frac{1}{p_i^{a_i}(p_i-1)} - \frac{1}{p_i^{a_i}} + \frac{1}{p^2} >0.$$

Thus, for any $i$, we have $$\frac{\sigma(\sigma(p_i^{a_i}))}{p_i^{a_i}} < \frac{p_i}{p_i-1} + \frac{1}{p_i^2}.$$ We then have $$\frac{\sigma(\sigma(n))}{n} \leq \frac{\sigma(\sigma(p_1^{a_1}))}{p_1^{a_1}}  \cdots \frac{\sigma(\sigma(p_k^{a_k}))}{p_k^{a_k}} <  \left(\frac{p_1}{p_1-1} + \frac{1}{p_1^2}\right) \cdots \left(\frac{p_k}{p_k-1} +\frac{1}{p_k^2} \right). $$ 
Note that the  first inequality above is a less than or equal rather than an equality in the above is not an equality because we may have some $i$ and $j$ where $i \neq j$ but $\sigma(p_i^{a_i}) = \sigma(p_j^{a_j})$, and this may make $\sigma(\sigma(n))$ smaller than if no such collision occurred. 
\end{proof}

We now prove Theorem \ref{strong version of KS1}.

\begin{proof} First, consider the case where we have exactly two distinct prime divisors. Assume that $n$ is a near superperfect number where $n=p^aq^b$ for some distinct pair of primes $p$ and $q$ with $p < q$. Assume further that $\sigma(p^a)$ and $\sigma(q^b)$ are both prime. Thus,
we have some $d$ where $d|n$, and

\begin{equation} 
    2n +d = \sigma(\sigma(n))
\end{equation}

If $p \neq 3 $, then by Lemma \ref{Modified abundance lemma}, we have 

\begin{equation}
    2 < \left(\frac{p}{p-1} + \frac{1}{p^2}\right)\left(\frac{q}{q-1} + \frac{1}{q^2}\right) \leq \left(\frac{5}{4} + \frac{1}{25}\right) \left(\frac{7}{6}+ \frac{1}{49}\right) = \frac{15007}{9800} < 2. \label{5 7 inequality } 
\end{equation}

Thus, we must have $p=3$. By similar logic, if $q \geq 7$, then we have

\begin{equation}
    2 < \left(\frac{p}{p-1} + \frac{1}{p^2}\right)\left(\frac{q}{q-1} + \frac{1}{q^2}\right) \leq \left(\frac{3}{2} + \frac{1}{9}\right) \left(\frac{7}{6}+ \frac{1}{49}\right) = \frac{5249}{2700} < 2. \label{5 7 inequality 2 } 
\end{equation}

Thus, we must have $q=5$. 

Thus, our number must be of the form $n=3^a 5^b$. Since $3^a + 3^{a-1} \cdots +1$ is prime, $a$ is even. Similarly, $b$ is also even. Assume temporarily that $a=2$. We can verify that $n=3^2 5^2$ and $n=3^2 5^4$ does not have the desired form. Thus, we must have $b \geq 6$. 

Since $b \geq 6$, we have  

\begin{equation} 2(3^a)(5^b) = 2n < 2n +d = \sigma(\sigma(n)) = \left(\frac{3^{a+1}-1}{2} +1\right)\left(\frac{5^{b+1}-1}4 +1\right).  \label{explicit equation for just 3 and 5}
\end{equation}

Equation \ref{explicit equation for just 3 and 5} implies that

\begin{equation}
    2 <  \left(\frac{\frac{3^{a+1}-1}{2} +1}{3^a}\right)\left(\frac{\frac{5^{b+1}-1}{4} +1}{5^b}\right) = \left(\frac{3}{2} + \frac{1}{2 (3^a)}\right)\left(\frac{5}{4} + \frac{3}{4 (5^b)}\right).\label{penultimate 3 5 equation}
\end{equation}

However, we have \begin{equation*}
    \left(\frac{3}{2} + \frac{1}{2 (3^a)}\right)\left(\frac{5}{4} + \frac{3}{4 (5^b)}\right) \leq \left(\frac{3}{2} +\frac{1}{18}\right)\left(\frac{5}{4} + \frac{3}{4(5^6)}\right) =  1.94 < 2, 
\end{equation*}
which contradicts Equation \ref{penultimate 3 5 equation}.

Thus, we must have $a >2$. Note that $\sigma(3^4)$ is not prime, and so we must have $a \geq 6$. But this leads to a contradiction of the same sort as in the previous situation. 

Thus, we must have $k \geq 3$. To see that that if $k \leq 5$, we must have $p_1=3$ note that if $p_1 \geq 3$ then
\begin{equation}\begin{split}2 < \frac{\sigma(\sigma(n))}{n} & < \left(\frac{5}{4}+\frac{1}{25}\right)\left(\frac{7}{6}+\frac{1}{49}\right)\left(\frac{11}{10}+\frac{1}{121}\right)\left(\frac{13}{12}+\frac{1}{169}\right)\left(\frac{17}{16}+\frac{1}{289}\right) \\ & <2,\end{split}\end{equation} which is a contradiction.
\end{proof}

A brief digression concerning Lemma \ref{Modified abundance lemma}: This Lemma and its use is very similar to a classic result that for any positive integer $n$, 
\begin{equation} \frac{\sigma(n)}{n} \leq \prod_{p|n} \frac{p}{p-1} \label{classic upper bound on sigma}
\end{equation}
with equality only when $n=1$.

Motivated by trying to understand odd perfect numbers, Norton defined the following function:
Let $P_n$ be the $n$th prime number. Norton defined $a(n)$, as the least such value such that \begin{equation} 2<  \prod_{r=n}^{n+a(n)-1} \frac{P_r}{P_r-1}. \label{Norton-Grun definition}
\end{equation}

It is easy to see that if $N$ is an odd perfect number or an odd abundant number with smallest prime divisor $P_n$, then $N$ must have at least $a(n)$ distinct prime divisors. Norton used the fact that  a similar product appears in Mertens' theorem to estimate $a(n)$. Thus, study of $a(n)$ is a natural object even if one is not strongly interested in odd perfect numbers. Subsequent work by the second author \cite{Zelinskybig} tightened those estimates and asked a variety of questions about the growth of $a(n)$. The second author discussed certain generalizations of this function in \cite{Zelinskyfollowup}.

Define the Kalita-Sakia function $KS(n)$ to be the least value such that \begin{equation} 2<  \prod_{r=n}^{n+a(n)-1} \frac{P_r}{P_r-1} + \frac{1}{P_r^2}. \label{Kalita and Saikia variant of Norton's function}
\end{equation}

Given the proof above of Theorem \ref{strong version of KS1},  $KS(n)$ seems like a natural function occurring in connection to near superperfects in a way similar to how Norton's function appears in the study of perfect numbers.  Note that $a(2)=3$, but $KS(2)=2$. It is also the case that $a(3) > KS(3)=5$.  This leads to two questions.

\begin{question} Is $a(n) > KS(n)$ for all $n$?
\end{question}

\begin{question} Is $ \displaystyle\lim_{n \rightarrow \infty} a(n)-KS(n) = \infty?$ \label{KS second question}    
\end{question}

However, while these questions are interesting, we can extend our results about odd near superperfect numbers using a different line of reasoning. Notice that if $n$ is odd, then $2n+d$ is odd for any $n$. But  in order for $\sigma(m)$ to be odd one must have $m=2^k b^2$ for some $k$ and $b$. Thus, in particular, in order for an odd $n$ to be near superperfect, one must have $\sigma(\sigma(n))$ odd, and thus $\sigma(n)=2^k b^2 $ for some $k$ and $b$. Thus, if $n$ is an odd near superperfect number and $n$ is a Kalita-Saikia number,   then $\sigma(n)$ must be a perfect square.

However, this condition requires $n$ to have many collisions of primes. That is, for any prime $p$ where $p^a||n$, there must be a prime $q$ and an integer $b$ such that $q^b||n$ and $\sigma(p^a)= \sigma(q^b)$. Such collisions are extremely rare. The  Goormaghtigh conjecture states this in a very broad way. 

\begin{conjecture}(Goormaghtigh) The only positive integers $x$, $y$, $m$ and $n$ where $x > y$ and $m$, $n, \geq 2$, and satisfying $$\frac{x^m - 1}{x-1} = \frac{y^n - 1}{y-1}$$

are $(x,y,m,n)= (5,2,3,5)$ and $(90,2,3,13)$.
\end{conjecture}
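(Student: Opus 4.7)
The plan is to recognize that the final displayed statement is the Goormaghtigh conjecture, a long-standing open problem in Diophantine number theory, so what follows is a sketch of the natural lines of attack rather than a complete proof. First, I would dispose of the degenerate cases: if $\min(m,n) = 2$, then one side equals $y+1$ (or $x+1$) and a direct size comparison reduces matters to a bounded search that produces no new solutions. Thus I may assume $m, n \geq 3$, and since $x > y \geq 2$ the repunit growth rates force $m < n$.

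My main tool would be Baker's theorem on linear forms in logarithms. Rewriting the equation as $x^m / y^n = \bigl((x-1)/(y-1)\bigr)\bigl(1 + O(y^{-n})\bigr)$ and taking logarithms yields that the quantity $\Lambda = m \log x - n \log y - \log\bigl((x-1)/(y-1)\bigr)$ satisfies $|\Lambda| \ll y^{-n}$. Baker's theorem supplies a lower bound of the shape $|\Lambda| \gg (\max(m,n))^{-C \log \log \max(m,n)}$ for an effective constant $C$ depending on $x$ and $y$. Comparing the two estimates forces $\max(m,n)$ to be bounded by an effectively computable function of $(x,y)$, which reduces the problem for each fixed base pair to a finite (if astronomical) computation.

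To close the argument one would need a complementary bound on $(x,y)$ in terms of $(m,n)$. Here the partial results of Nesterenko--Shorey and Bugeaud--Shorey already handle important special cases: for fixed $(m,n)$ with $\gcd(m-1,n-1) = 1$ there are only finitely many $(x,y)$, and for any fixed $y$ only finitely many triples $(x,m,n)$ occur. One would hope to interleave these with the Baker bound above to force all four variables into an explicit box, then check by computation.

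The main obstacle, and the reason Goormaghtigh's conjecture has stood open for nearly a century, is that no current combination of the above finiteness results closes the gap between ``bounded in terms of the other pair'' in both directions simultaneously, and the Baker-type bounds are far too large to eliminate by direct search. Modular and $p$-adic techniques of the kind Mihailescu used for Catalan's conjecture do not appear to transfer, since both sides of the equation are repunits rather than perfect powers and therefore lack the rigid cyclotomic structure Mihailescu exploited. I would therefore expect that any genuine proof requires a new idea from transcendence theory or arithmetic geometry, and I would not attempt a full resolution within the framework of this paper.
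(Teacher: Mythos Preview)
You correctly recognize that the displayed statement is the Goormaghtigh conjecture, and you are right not to claim a proof: the paper does not prove it either. It is stated there purely as a conjecture and then used only conditionally, to motivate Conjecture~\ref{conjecture that there no odd Kalita-Saikia near superperfect} and (via Yuan's theorem on the case $m=3$) to obtain Theorem~\ref{KS numbers near superperfect must have all prime factors raised to at least the fourth power}. So there is nothing to compare your sketch against; no proof was ever expected, and your concluding paragraph acknowledging the problem is open is the honest and appropriate response.

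Your survey of the standard attack lines (ruling out $\min(m,n)=2$, Baker-type bounds on linear forms in logarithms, the Nesterenko--Shorey and Bugeaud--Shorey partial results) is accurate as background, though it goes well beyond what the paper needs. If anything, for the purposes of this paper the only relevant piece of that machinery is Yuan's resolution of the $m=3$ case, which the paper cites directly; the rest of your discussion, while correct, is extraneous here.
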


In order for $n$ to be an Kalita-Saikia number and also be a near superperfect number, $n$ needs the following property: For any prime $p$ where $p^a||n$, we must have $\sigma(p^a)$ prime, and we must have a prime $q$ and a $b$ such that the quadruplet forms a counterexample to the Goormaghtigh conjecture. In this context it seems that a safe (perhaps overly safe) conjecture is the following.

\begin{conjecture} There are no odd Kalita-Saikia near superperfect numbers. \label{conjecture that there no odd Kalita-Saikia near superperfect}   
\end{conjecture}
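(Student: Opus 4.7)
The plan is to derive structural constraints on any hypothetical odd Kalita--Saikia near superperfect $n$ that force collisions of the form $\sigma(p^a)=\sigma(q^b)$ among its prime-power components, and then to invoke the Goormaghtigh conjecture to rule these collisions out.

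First, I would sharpen the parity observation already sketched in the paper. If $n$ is odd and near superperfect with redundant divisor $d$, then $\sigma(\sigma(n))=2n+d$ is odd, so $\sigma(n)$ must be of the form $2^{j}t^{2}$ for some $j\geq 0$ and integer $t$. But under the Kalita--Saikia hypothesis each $\sigma(p_i^{a_i})$ is prime, and since $n$ is odd with each $a_i$ necessarily even (as noted in the proof of Lemma \ref{Modified abundance lemma}), these primes $q_i:=\sigma(p_i^{a_i})$ are all odd. Hence $\sigma(n)=\prod_i q_i$ is odd, which forces $j=0$ and
\[
\sigma(n)\;=\;\prod_{i=1}^{k} q_i\;=\;t^{2}.
\]

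Next, since the left-hand side is a product of odd primes equal to a square, every prime value appearing in the multiset $\{q_1,\ldots,q_k\}$ must appear with even multiplicity. In particular $k$ is even, and the $q_i$ partition into equal pairs. For any pair with $\sigma(p_i^{a_i})=\sigma(p_j^{a_j})$ and $p_i\neq p_j$ both odd primes with $a_i,a_j\geq 2$, one obtains
\[
\frac{p_i^{a_i+1}-1}{p_i-1}\;=\;\frac{p_j^{a_j+1}-1}{p_j-1},
\]
which, taking $p_i>p_j$ and noting $a_i+1,a_j+1\geq 3$, is a non-trivial instance of the Goormaghtigh equation. Both known exceptional solutions have smaller base equal to $2$, so under the Goormaghtigh conjecture no such collision between odd prime powers exists, yielding the desired contradiction.

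The principal obstacle is that the Goormaghtigh conjecture is itself open, so any fully unconditional proof would require new progress on it, or at least on the strictly weaker statement ruling out Goormaghtigh solutions in which both bases are odd primes and both exponents are at least $3$. One might attempt to make a dent without the full conjecture by combining the pairing constraint above with Theorem \ref{strong version of KS1}: parity forces $k\geq 4$, and for $k=4$ that theorem pins down $p_1=3$, after which the equation $\sigma(3^{a_1})=\sigma(p^{a})$ with $p$ odd and $a\geq 2$ could be attacked via linear-forms-in-logarithms bounds or via existing partial Goormaghtigh results for fixed exponent pairs, in the hope of closing at least the first few cases $k=4,6$ unconditionally. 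For larger even $k$ one could further exploit the abundancy estimate of Lemma \ref{Modified abundance lemma} to bound the admissible prime supports before searching.
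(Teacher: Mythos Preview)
Your argument is essentially the same as the paper's: the paper does not prove this statement unconditionally either, but observes (in the paragraphs immediately preceding and following the conjecture) that an odd Kalita--Saikia near superperfect $n$ must have $\sigma(n)$ a perfect square, forcing collisions $\sigma(p_i^{a_i})=\sigma(p_j^{a_j})$ among distinct odd prime powers, and that the Goormaghtigh conjecture rules these out. You have spelled out the parity and pairing steps more carefully than the paper does, and you correctly flag that the argument is conditional on Goormaghtigh; your suggested attack on small even $k$ via Theorem~\ref{strong version of KS1} and partial Goormaghtigh results goes slightly beyond what the paper attempts.
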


By the above remarks, the Goormaghtigh conjecture implies Conjecture \ref{conjecture that there no odd Kalita-Saikia near superperfect}.

In fact, we can use some of the known results of the Goormaghtigh conjecture to tighten some of our above results. In particular, Yuan \cite{Yuan} proved that the Goormaghtigh conjecture is true for $n=3$. This means that for any odd prime $p$, $\sigma(p^2)$ can never have a collision with $\sigma(q^b)$ for an odd prime $q$. 

We thus have the following theorem.
\begin{theorem} If $n$ is an odd Kalita-Saikia number which is near superperfect, then every prime factor of $n$ must be raised to at least the fourth power. \label{KS numbers near superperfect must have all prime factors raised to at least the fourth power}
\end{theorem}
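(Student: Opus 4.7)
The plan is to combine two ingredients: the parity-and-square constraint on $\sigma(n)$ noted just before Conjecture~\ref{conjecture that there no odd Kalita-Saikia near superperfect}, and Yuan's resolution of the Goormaghtigh conjecture in the exponent-$3$ case. I will show that if any exponent $a_i$ equals $2$, the Kalita--Saikia condition forces a collision $\sigma(p_i^2) = \sigma(p_j^{a_j})$ between two $\sigma$-values of distinct odd prime powers, and then argue that such a collision is impossible.

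First, since $n$ is odd, $2n + d$ and hence $\sigma(\sigma(n))$ are odd, which forces $\sigma(n)$ to be an odd perfect square. Writing $\sigma(n) = \prod_i \sigma(p_i^{a_i})$, each factor is an odd prime---primality from the Kalita--Saikia hypothesis, oddness forcing each $a_i$ to be even since $\sigma(p_i^{a_i})$ is a sum of $a_i+1$ odd terms. For the product of these odd primes to be a perfect square, every value must appear an even number of times in the multiset $\{\sigma(p_i^{a_i})\}$, so for each $i$ there exists $j \neq i$ with $\sigma(p_j^{a_j}) = \sigma(p_i^{a_i})$.

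Now suppose for contradiction that some $a_i = 2$, and let $j \neq i$ be a matching index. A partner with $a_j = 2$ would give $1 + p_i + p_i^2 = 1 + p_j + p_j^2$, forcing $p_i = p_j$ by injectivity of $x \mapsto 1 + x + x^2$ on positive reals, contradicting distinctness of the primes. Hence $a_j$ is even and at least $4$, and the equality $\sigma(p_i^2) = \sigma(p_j^{a_j})$ rewrites as
\[
\frac{p_i^3 - 1}{p_i - 1} = \frac{p_j^{a_j + 1} - 1}{p_j - 1}.
\]
This is a Goormaghtigh instance with one exponent equal to $3$, and by Yuan's theorem every solution to such an instance has $2$ as one of the two bases; but $p_i$ and $p_j$ are both odd, a contradiction. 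Thus every $a_i$ is even and not equal to $2$, so $a_i \geq 4$. The only subtle step is isolating the $a_j = 2$ subcase, since it is not covered by Yuan itself but instead by the elementary injectivity remark; after that, the application of Yuan closes the argument immediately.
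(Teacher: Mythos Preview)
Your proof is correct and follows essentially the same route as the paper: use the parity constraint to force $\sigma(n)$ to be a perfect square, deduce that each $\sigma(p_i^{a_i})$ must collide with some $\sigma(p_j^{a_j})$, and then invoke Yuan's resolution of the exponent-$3$ Goormaghtigh case to rule out $a_i=2$. Your explicit separation of the $a_j=2$ subcase via injectivity of $x\mapsto 1+x+x^2$ is a small refinement over the paper, which simply asserts that $\sigma(p^2)$ admits no collision with any $\sigma(q^b)$ for odd primes $p\neq q$ as a consequence of Yuan's theorem.
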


Theorem \ref{KS numbers near superperfect must have all prime factors raised to at least the fourth power} motivates the following variant of Lemma \ref{Modified abundance lemma} whose proof uses the same method:

\begin{lemma} Assume $n$ is a Kalita-Saikia number, and suppose that for any prime $p$ where $p|n$ we have $p^4|n$.  Then $$\frac{\sigma(\sigma(n)}{n} \leq \prod_{p|n} \frac{p}{p-1} + \frac{1}{p^4}.$$   \label{upper bound for sigma sigma for KS numbers with 4th power}
\end{lemma}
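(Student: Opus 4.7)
The plan is to mimic the proof of Lemma~\ref{Modified abundance lemma}, replacing the local bound $\frac{p}{p-1}+\frac{1}{p^2}$ by $\frac{p}{p-1}+\frac{1}{p^4}$ and exploiting the stronger hypothesis $a_p\geq 4$ in place of $a_p\geq 2$. Write $n=\prod_{p\mid n} p^{a_p}$, so that $a_p\geq 4$ and $\sigma(p^{a_p})$ is prime for every $p\mid n$. Since $\sigma$ applied to a prime $q$ equals $q+1$, one has $\sigma(\sigma(p^{a_p}))=\sigma(p^{a_p})+1$, and expanding the geometric sum yields
$$\frac{\sigma(\sigma(p^{a_p}))}{p^{a_p}}=\sum_{k=0}^{a_p-1}\frac{1}{p^k}+\frac{2}{p^{a_p}}.$$

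The heart of the argument is the local inequality $\frac{\sigma(\sigma(p^{a_p}))}{p^{a_p}}\leq \frac{p}{p-1}+\frac{1}{p^4}$, where the right-hand side expands as $\sum_{k=0}^{\infty}\frac{1}{p^k}+\frac{1}{p^4}$. A direct term-by-term comparison shows that the difference (bound minus value) equals $\frac{1}{p^{4}(p-1)}$ when $a_p=4$, and equals $\frac{1}{p^{4}}-\frac{p-2}{p^{a_p}(p-1)}$ when $a_p\geq 5$. The latter quantity is nonnegative for every prime $p$: the subtracted term vanishes when $p=2$, and for $p\geq 3$ the estimate $p^{a_p}(p-1)\geq p^5(p-1)=p^4\cdot p(p-1)\geq p^4(p-2)$ yields the claim. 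This is the direct analogue, shifted by two exponents, of the estimate $\frac{1}{p_i^{a_i}(p_i-1)}-\frac{1}{p_i^{a_i}}+\frac{1}{p_i^2}>0$ used in the proof of Lemma~\ref{Modified abundance lemma}.

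The global step is then identical to that proof: because $\sigma(n)=\prod_{p\mid n}\sigma(p^{a_p})$ and $\sigma(ab)\leq \sigma(a)\sigma(b)$ for any positive integers $a,b$ (with equality precisely when $\gcd(a,b)=1$), we obtain
$$\frac{\sigma(\sigma(n))}{n}\leq \prod_{p\mid n}\frac{\sigma(\sigma(p^{a_p}))}{p^{a_p}}\leq \prod_{p\mid n}\left(\frac{p}{p-1}+\frac{1}{p^4}\right),$$
which is the asserted bound. The first inequality is strict exactly when two of the primes $\sigma(p^{a_p})$ coincide, which is the same caveat recorded at the end of the proof of Lemma~\ref{Modified abundance lemma}. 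There is no genuine obstacle in this argument; the only substantive step is the elementary comparison between a truncated geometric sum and its infinite version shifted by $\frac{1}{p^4}$, and this comparison is exactly what the strengthened hypothesis $a_p\geq 4$ is designed to accommodate.
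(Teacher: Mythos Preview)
Your proof is correct and follows exactly the approach the paper intends: the paper does not write out a separate proof for this lemma but simply states that it ``uses the same method'' as Lemma~\ref{Modified abundance lemma}, and your argument is precisely that method with the exponent shifted from $2$ to $4$. The local identity, the verification that the correction term is nonnegative once $a_p\ge 4$ (including the $p=2$ case, which the present lemma allows since oddness is not assumed), and the submultiplicativity step with its caveat about collisions all match the template of Lemma~\ref{Modified abundance lemma}.
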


An immediate consequence of Lemma \ref{upper bound for sigma sigma for KS numbers with 4th power} is the following.

\begin{proposition} Suppose $n$ is an odd  Kalita-Saikia number which is near superperfect. If $n$ is not divisible by 3, then $n$ must have at least 7 distinct prime factors.
\end{proposition}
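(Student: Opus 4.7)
The plan is to combine Lemma \ref{upper bound for sigma sigma for KS numbers with 4th power} with the near superperfect condition $\sigma(\sigma(n))/n > 2$, and then check that having at most six prime factors (all at least 5) forces the upper bound below 2. First, I would invoke Theorem \ref{KS numbers near superperfect must have all prime factors raised to at least the fourth power} to guarantee that every prime divisor $p$ of $n$ satisfies $p^4 \mid n$, so the hypothesis of Lemma \ref{upper bound for sigma sigma for KS numbers with 4th power} applies. Since $n$ is near superperfect, $\sigma(\sigma(n)) = 2n + d > 2n$, hence
$$2 < \frac{\sigma(\sigma(n))}{n} \leq \prod_{p \mid n} \left(\frac{p}{p-1} + \frac{1}{p^4}\right).$$

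Next, since $n$ is odd and not divisible by 3, every prime divisor of $n$ is at least 5. I would observe that the function $f(p) = \frac{p}{p-1} + \frac{1}{p^4}$ is strictly decreasing in $p$, because its derivative equals $-\frac{1}{(p-1)^2} - \frac{4}{p^5} < 0$. Consequently, for any fixed number $k$ of distinct prime divisors, the product $\prod f(p_i)$ is maximized by selecting the $k$ smallest primes available, namely $5, 7, 11, 13, 17, 19, \ldots$.

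It then suffices to verify that using the six smallest eligible primes $5, 7, 11, 13, 17, 19$, the product is already below 2. A direct numerical computation of
$$\left(\frac{5}{4}+\frac{1}{5^4}\right)\left(\frac{7}{6}+\frac{1}{7^4}\right)\left(\frac{11}{10}+\frac{1}{11^4}\right)\left(\frac{13}{12}+\frac{1}{13^4}\right)\left(\frac{17}{16}+\frac{1}{17^4}\right)\left(\frac{19}{18}+\frac{1}{19^4}\right)$$
yields a value less than 2 (approximately $1.953$). Since taking any six primes $\geq 5$ other than these only makes the product smaller, having $k \leq 6$ distinct prime factors contradicts the lower bound $\sigma(\sigma(n))/n > 2$. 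Therefore $n$ has at least 7 distinct prime factors.

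The main obstacle is essentially mechanical: verifying the six-factor product stays below 2 with enough margin. The structural work (reducing to the smallest-primes case via monotonicity of $f$ and invoking the fourth-power condition from the preceding theorem) is immediate. One sanity check worth emphasizing in the writeup is that exact fractional arithmetic, rather than decimal estimates, is used for the final bound, to avoid any ambiguity about the strict inequality.
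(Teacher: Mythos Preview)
Your proof is correct and is precisely the argument the paper has in mind: the paper states the proposition as ``an immediate consequence'' of Lemma~\ref{upper bound for sigma sigma for KS numbers with 4th power}, and your write-up simply fills in the details (invoking Theorem~\ref{KS numbers near superperfect must have all prime factors raised to at least the fourth power} to justify the fourth-power hypothesis, using monotonicity of $p/(p-1)+1/p^4$ to reduce to the primes $5,7,11,13,17,19$, and checking the resulting product is below $2$). The only cosmetic remark is that the paper would likely omit the derivative justification for monotonicity, but including it does no harm.
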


Similarly to how $a(n)$ and $KS(n)$ are defined we may now define $KS_4(n)$ as the least number such that 

\begin{equation} 2<  \prod_{r=n}^{n+KS_4-1} \frac{P_r}{P_r-1} + \frac{1}{P_r^4}. \label{Kalita and Saikia second variant of Norton's function}
\end{equation}

Note that $KS_4(2)=a(2)$ and $KS_4(3)=a(3)$. 

\begin{question} Is $KS_4(n)=a(n)$ for all $n \geq 2$?
\end{question}

More generally, let us define the Kalita-Norton-Saikia function as follows. $KNS_{\alpha,c}(n)$ is the least number such that 

\begin{equation} 2<  \prod_{r=n}^{n+KNS_\alpha-1} \frac{P_r}{P_r-1} + \frac{c}{P_r^\alpha}. \label{Kalita and Saikia generalform of Norton's function}
\end{equation}

Note that $KNS_{1,0}(n) =a(n) $, $KNS_{2,1}(n) = KS(n)$. Similarly,  $KNS_{4,1} = KS_4(n)$.

Two questions naturally arise. Let $U$ be the set of $\alpha$ such that there exists a $c >0$ such that $KNS(\alpha,c)(n) > a(n)$ for all sufficiently large $n$. Let $V$ be the set of $\alpha$ such that there exists a $c>0$ such that $KNS(\alpha,c)(n) > a(n)$ for infinitely many $n$.

\begin{question} What is the supremum of $U$?
\end{question}

\begin{question} What is the supremum of $V$?
\end{question}

There are also substantial restrictions on how similar the factorization of numbers of the form $\frac{x^\ell-1}{x-1}$ can be for different choices of $x$ and for a fixed value $\ell$. See for example, \cite{Yamada}. It seems plausible that this type of result can be used to substantially further restrict what an odd near superperfect number can look like or to rule out the case of a Kalita-Saikia odd near superperfect.

It is well known that the set of perfect numbers has natural density zero, and it is not hard to show that the same is true for the set of superpefect numbers. One might wonder if this was also the case for the near superperfect, since the presence of a ''+d'' in the definition gives some flexibility. However,  they still have natural density zero. To prove this, we need a Lemma.

\begin{lemma} Let $C$ be a positive constant. Then the set of numbers $n$ satisfying $\sigma(\sigma(n)) > C\sigma(n)$ has density 1. \label{Arb large h(sigma(n)) on set of density 1}
\end{lemma}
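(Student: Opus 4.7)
The strategy is to show that for any fixed $K$, the set of $n$ for which every prime $p \leq K$ divides $\sigma(n)$ has natural density $1$. Combined with the lower bound
\begin{equation*}
\frac{\sigma(\sigma(n))}{\sigma(n)} = \prod_{p^a || \sigma(n)} \left(1 + \frac{1}{p} + \cdots + \frac{1}{p^a}\right) \geq \prod_{p | \sigma(n)}\left(1 + \frac{1}{p}\right),
\end{equation*}
and the fact that $\prod_{p \leq K}(1+1/p) \to \infty$ as $K \to \infty$ (a Mertens-type divergence), this will suffice: given $C$, pick $K$ with $\prod_{p \leq K}(1+1/p) > C$, and then the density-$1$ set of $n$ on which every prime $p \leq K$ divides $\sigma(n)$ automatically satisfies $\sigma(\sigma(n))/\sigma(n) > C$.

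The core step is the claim that, for each fixed prime $p$, the set $\{n : p | \sigma(n)\}$ has density $1$. My plan is to use Dirichlet's theorem to produce infinitely many primes $q_1 < q_2 < \cdots$ with $q_i \equiv -1 \pmod{p}$. Whenever $q_i || n$, multiplicativity of $\sigma$ gives $(q_i + 1) | \sigma(n)$, and hence $p | \sigma(n)$. The condition $q_i || n$ is determined modulo $q_i^2$ and cuts out a set of density $(q_i - 1)/q_i^2$, and by the Chinese remainder theorem these conditions are independent across distinct $i$. Thus for each finite $N$, the set of $n$ with $q_i || n$ for \emph{no} $i \leq N$ has density $\prod_{i=1}^N \bigl(1 - (q_i - 1)/q_i^2\bigr)$, and this product tends to $0$ as $N \to \infty$ because $\sum_i 1/q_i$ diverges over primes in an arithmetic progression. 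Hence the set of $n$ possessing at least one $q_i$ as an exact prime divisor has density $1$, and it is contained in $\{n : p | \sigma(n)\}$.

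The remaining assembly is routine: a finite intersection of density-$1$ sets has density $1$, so intersecting $\{n : p | \sigma(n)\}$ over all primes $p \leq K$ produces the required density-$1$ set.

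The main point requiring care is the passage from finite intersections (where CRT gives exact independence) to a statement about natural density itself; I would be explicit that the upper density of the ``bad'' set is bounded above by the finite product for every $N$ and hence is zero. The only analytic input beyond Dirichlet's theorem is the divergence of $\sum 1/q$ over primes $q$ in a fixed arithmetic progression, which is classical. It is worth emphasizing that the naive analogue $\sigma(n)/n > C$ on a density-$1$ set is \emph{false} (the average of $\sigma(n)/n$ is bounded by $\zeta(2)$); the present lemma succeeds precisely because $\sigma(n)$ typically has many more small prime factors than $n$ itself.
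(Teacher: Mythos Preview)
Your proof is correct and follows essentially the same route as the paper: both arguments use Dirichlet's theorem (and the divergence of $\sum 1/q$ over primes in a progression) to show that for density-$1$ many $n$ there is some prime $q\equiv -1$ modulo the target modulus with $q\,\|\,n$, forcing that modulus to divide $\sigma(n)$. The only cosmetic difference is that the paper bundles all small primes into the single modulus $m!$ (so one progression suffices), whereas you treat each prime $p\le K$ separately and then intersect the resulting density-$1$ sets.
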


Lemma \ref{Arb large h(sigma(n)) on set of density 1} follows immediately from following Lemma.

\begin{lemma} For any integer $m$, the set of $n$ such that $m!|\sigma(n)$ has density 1.
\end{lemma}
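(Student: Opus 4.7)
The plan is to reduce to one prime power at a time and then exploit Dirichlet-type divergence of reciprocals of primes in an arithmetic progression.

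First, I would observe that if $m! = \prod_j q_j^{b_j}$ is the prime factorization, then since a finite intersection of density-$1$ sets has density $1$, it suffices to prove the following claim for each prime power $q^a$: \emph{the set $A_{q^a} := \{n : q^a \mid \sigma(n)\}$ has density $1$}.

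To prove the claim, I would use the standard fact that if $p$ is a prime with $p \mid n$ but $p^2 \nmid n$ (written $p \| n$), then $(p+1) = \sigma(p) \mid \sigma(n)$ by multiplicativity of $\sigma$. So if I pick any prime $p$ with $q^a \mid p+1$, then $p \| n$ implies $n \in A_{q^a}$. By Dirichlet's theorem on primes in arithmetic progressions, there are infinitely many primes $p \equiv -1 \pmod{q^a}$; enumerate them as $p_1 < p_2 < \cdots$. For a finite $k$, let
\begin{equation*}
B_k := \{n : p_i \not\| n \text{ for every } 1 \leq i \leq k\}.
\end{equation*}
A standard CRT/sieve computation on the modulus $\prod_{i \leq k} p_i^2$ shows that the event ``$p_i \not\| n$'' has density $1 - \tfrac{p_i-1}{p_i^2} = \tfrac{p_i^2 - p_i + 1}{p_i^2}$, and these events for distinct $p_i$ are jointly density-independent, so
\begin{equation*}
\text{density}(B_k) = \prod_{i=1}^{k} \frac{p_i^2 - p_i + 1}{p_i^2}.
\end{equation*}
Since $-\log\bigl(1 - \tfrac{p_i-1}{p_i^2}\bigr) \geq \tfrac{p_i-1}{p_i^2} = \tfrac{1}{p_i} - \tfrac{1}{p_i^2}$, and $\sum 1/p_i$ diverges over primes in a fixed residue class coprime to the modulus (Dirichlet), while $\sum 1/p_i^2$ converges, we get $\text{density}(B_k) \to 0$ as $k \to \infty$. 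Because $B_k^c \subseteq A_{q^a}$, this forces the lower density of $A_{q^a}$ to be at least $1 - \text{density}(B_k)$ for every $k$, hence $A_{q^a}$ has density $1$.

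The only genuinely non-elementary input is the divergence of $\sum 1/p$ over primes $p \equiv -1 \pmod{q^a}$, which is the main content and is a standard corollary of Dirichlet's theorem (or can be extracted from Mertens' theorem for arithmetic progressions). Everything else is bookkeeping with CRT and the fact that a finite intersection of density-$1$ sets has density $1$.
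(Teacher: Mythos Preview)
Your proof is correct and uses the same core idea as the paper: primes $p \equiv -1$ modulo the relevant modulus force the needed divisibility of $\sigma(n)$ whenever $p \,\|\, n$, and the divergence of $\sum 1/p$ over such primes (the ``strong form'' of Dirichlet) makes the set of $n$ admitting such a $p$ have density $1$. The only difference is that the paper works directly with the single modulus $m!$, whereas you first reduce to prime powers $q^a \mid m!$ and treat each separately; this extra decomposition is harmless but unnecessary, and your explicit sieve computation just spells out what the paper invokes in one sentence.
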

\begin{proof} Fix a positive integer $m$. The strong form of Dirichlet's theorem on arithmetic progressions implies that the set of $n$ with at least one prime factor $p$ such that $p||n$ and $p \equiv -1 \pmod{m!}$ has density 1. Since for any such number $n$ which has a prime divisor of that form has $m!|\sigma(n)$, the result follows. 
\end{proof}

We now ready to make a concrete statement about the density of near superperfect numbers. 

\begin{proposition} Near superperfect numbers have natural density zero. \label{density zero}  
\end{proposition}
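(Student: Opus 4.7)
The plan is to reduce Proposition \ref{density zero} directly to Lemma \ref{Arb large h(sigma(n)) on set of density 1}. The observation is that a near superperfect number satisfies a strong upper bound on $\sigma(\sigma(n))$: since $d$ is a positive divisor of $n$, we have $1 \leq d \leq n$, so the defining equation $2n+d=\sigma(\sigma(n))$ forces
\[
2n < \sigma(\sigma(n)) \leq 3n.
\]
Thus every near superperfect number lives in the set $S := \{n : \sigma(\sigma(n)) \leq 3n\}$, and it suffices to show that $S$ has natural density zero.

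To bound the density of $S$, I would invoke Lemma \ref{Arb large h(sigma(n)) on set of density 1} with $C=3$: the set $T := \{n : \sigma(\sigma(n)) > 3\sigma(n)\}$ has density $1$. Using the trivial inequality $\sigma(n)\geq n$, any $n\in T$ satisfies $\sigma(\sigma(n)) > 3\sigma(n) \geq 3n$, so $T$ is disjoint from $S$. Consequently $S$ is contained in the complement of $T$, which has density zero, and the proposition follows.

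The entire argument is one short step once Lemma \ref{Arb large h(sigma(n)) on set of density 1} is available, so there is no real obstacle; the heavy lifting was done in establishing that $m! \mid \sigma(n)$ on a set of density one (via Dirichlet primes in the progression $-1 \pmod{m!}$). If one wanted a self-contained quantitative version, the main obstacle would be making the density bound effective, but for the qualitative statement of natural density zero the reduction above is immediate.
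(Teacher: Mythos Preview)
Your proof is correct and follows essentially the same approach as the paper: both observe that $2n+d\le 3n$ forces $\sigma(\sigma(n))\le 3n$, and then invoke Lemma~\ref{Arb large h(sigma(n)) on set of density 1} (together with the trivial bound $\sigma(n)\ge n$) to see that this fails on a set of density~$1$. The only cosmetic difference is that the paper notes concretely that $5!\mid\sigma(n)$ already suffices, whereas you phrase it abstractly via the set $T$.
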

\begin{proof} A number $n$ cannot be near superperfect if $\sigma(\sigma(n)) > 3n$, since $2n+d \leq 3n$ no matter the choice of $d$. However, the set of $n$ such that $\sigma(\sigma(n)) > 3n$ is a set of density 1 by Lemma \ref{Arb large h(sigma(n)) on set of density 1}, since any number where $5!|\sigma(n)$ will satisfy this bound.
\end{proof}

We also here list three additional questions about Kalita-Saikia numbers.

\begin{question} $(2,4)$, $(16,18)$, $(576,578)$ are examples of pairs of Kalita-Saikia numbers which are exactly 2 apart. Are these the only examples?
\end{question}

\begin{question} Is there any $n$ such that $n$ and $n+1$ are both Kalita-Saikia numbers?\label{consecutive KS numbers}
\end{question}

We suspect that the answer to Question \ref{consecutive KS numbers} is no. If $n$ and $n+1$ are consecutive Kalita-Saikia numbers, then one of them must be a perfect square and the other must be two times a perfect square. These thus would be arising as a special case of the solutions to two Pell equations. While such equations have have infinitely many solutions, the solutions themselves grow exponentially. At a minimum then, it seems safe at least to conjecture that there are only finitely many $n$ such that $n$ and $n+1$ are both Kalita-Saikia numbers. \\

The Kalita-Saikia numbers empirically appear to be a subsequence of OEIS sequence A337343. Here, A337343 is defined as follows. Define $t(n)$ as a multiplicative function, satisfying $t(2^m)=1$ for any power of 2, and for any odd prime $p$, $t(p^e)=q^e$ where $q$ is the largest prime less than $p$. Then a number $n$ is in A337343 when $t(\sigma(n)) \geq n$.
    
\begin{question} Does every Kalita-Saikia number $n$ satisfy 
$t(\sigma(n)) \geq n$?
\end{question}

\section{Searching for near superperfect numbers}

At the end of their paper, Kalita and Saikia asked two questions:

 \begin{enumerate}
     \item Are all near superperfect numbers either powers of 2 or Mersenne primes?
     \item Are 8 and 512 the only powers of 2 which are near superperfect?
 \end{enumerate}

The first question has an answer of ``no'' since $21$ is near superperfect. However, this seems to be the only exception. We therefore propose the  modified question: ``Are all near superperfect numbers either 21, powers of 2, or Mersenne primes?''

This repaired question motivates a more systematic search for near superperfect numbers. The naive algorithm is to calculate $\sigma(\sigma(n))$, and then see when $\sigma(\sigma(n))-2n|n$. However, it turns out that in many situations,  the full calculation is not required. In particular, information about $n$ and $\sigma(n)$ allows one often to conclude that $n$ is not near superperfect without going through the extra effort of calculating $\sigma(\sigma(n))$.

A search for such near superperfect numbers will also naturally detect the situation where one has $2n-d = \sigma(\sigma(n))$, for some positive divisor of $d$. A number $n$ such that $2n-d= \sigma(n)$ is referred to in the literature as a \textit{deficient-perfect number}. Deficient perfect numbers have has been extensively studied. See for example work by Tang and Feng \cite{Tang Feng}, as well as work by Tang, Ren, and Li \cite{Tang Ren Li} and and that of Yang and Tobe\cite{Yang Tobe}. By analogy, we will say a number $n$ is \textit{deficient superperfect} if it satisfies $2n-d = \sigma(\sigma(n))$ for some positive divisor $d$ of $n$. At present, the only known such number is $n=1$. 

Let $Q$ be the set of numbers which are either near superperfect or are deficient perfect. 

\begin{proposition} If $n \in Q$  and $n$ is odd, then $\sigma(n)= 2^km^2$ for some odd $m$.
\end{proposition}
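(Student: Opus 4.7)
The plan is to reduce the statement to the classical characterization of when $\sigma$ takes an odd value. First I would split on the two cases comprising $Q$. If $n$ is near superperfect, there is a positive divisor $d$ of $n$ with $2n + d = \sigma(\sigma(n))$; if $n$ is deficient superperfect, there is a positive divisor $d$ of $n$ with $2n - d = \sigma(\sigma(n))$. In either case, the assumption that $n$ is odd forces $d$ to be odd (as a divisor of an odd number), and $2n$ is even, so $2n \pm d$ is odd. Therefore $\sigma(\sigma(n))$ is odd.

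Next I would invoke the standard fact that for any positive integer $N$, $\sigma(N)$ is odd if and only if $N$ is of the form $2^k b^2$ with $k \geq 0$ and $b$ odd. This follows by writing $N = 2^{\alpha} p_1^{e_1} \cdots p_r^{e_r}$ with the $p_i$ distinct odd primes and noting that $\sigma(2^{\alpha}) = 2^{\alpha+1}-1$ is always odd, while each $\sigma(p_i^{e_i}) = 1 + p_i + \cdots + p_i^{e_i}$ is a sum of $e_i+1$ odd terms, hence odd iff $e_i$ is even. So oddness of $\sigma(N)$ is equivalent to every $e_i$ being even, i.e., the odd part of $N$ being a perfect square.

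Applying this characterization with $N = \sigma(n)$ yields exactly $\sigma(n) = 2^k m^2$ for some $k \geq 0$ and odd $m$, which is the desired conclusion. The only real content is the parity argument on $2n \pm d$ and the classical lemma just described; there is no substantive obstacle, and in particular one does not need to distinguish further between the near superperfect and deficient superperfect cases since the parity of $2n \pm d$ is the same in both.
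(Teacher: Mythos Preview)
Your proof is correct and follows essentially the same approach as the paper: show that $2n\pm d$ is odd when $n$ is odd, deduce $\sigma(\sigma(n))$ is odd, and then apply the classical characterization of when $\sigma$ takes an odd value. If anything, your write-up is slightly more complete, since you explicitly handle both the near superperfect and the deficient superperfect cases comprising $Q$, whereas the paper's proof only spells out the $2n+d$ case.
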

\begin{proof} Assume that $n$ is odd and superperfect. Thus, we have there is some $d|n$ where
$$2n + d = \sigma(\sigma(n)),$$ Since $n$ is odd, so is $d$, and hence $2n+d$ is odd. Thus, $\sigma(\sigma(n))$ is also odd. Note that for any $a$ $\sigma(a)$ is odd if and only if $a$ is a power of 2 times an odd square, and so in this circumstance $\sigma(n)$ must be a power of 2 times an odd square.
\end{proof}

The above proposition means that for the vast majority of odd $n$, when $\sigma(n)$ is calculated, there is no need to calculate $\sigma(\sigma(n))$ since in the vast majority of circumstances $\sigma(n)$ is not of the needed form.

From the same sort of logic as the proof of Proposition \ref{density zero}, we have the following. 
\begin{lemma} If $n \in Q$, then $\sigma(n) \leq 3n$.  \label{sigma(n) at most 3n when n near super}
\end{lemma}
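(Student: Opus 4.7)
The plan is to bound $\sigma(\sigma(n))$ from above by $3n$ using the definitions, and then use the trivial inequality $\sigma(m) \geq m$ to descend from $\sigma(\sigma(n))$ to $\sigma(n)$.

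First I would split into the two cases defining $Q$. If $n$ is near superperfect, there exists a divisor $d$ of $n$ with $\sigma(\sigma(n)) = 2n+d$; since $d \leq n$, we get $\sigma(\sigma(n)) \leq 3n$. If $n$ is deficient superperfect (which I take the ``deficient perfect'' in the definition to mean, by context), there exists a divisor $d$ of $n$ with $\sigma(\sigma(n)) = 2n - d \leq 2n \leq 3n$. This is the step alluded to by ``the same sort of logic as the proof of Proposition \ref{density zero}.''

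Next, I would apply the elementary fact that $\sigma(m) \geq m$ for every positive integer $m$ (since $m$ is one of the divisors counted in $\sigma(m)$). Taking $m = \sigma(n)$, this gives $\sigma(\sigma(n)) \geq \sigma(n)$. Chaining with the bound above yields $\sigma(n) \leq \sigma(\sigma(n)) \leq 3n$, which is the desired conclusion.

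There is no real obstacle here, since both ingredients are immediate: the upper bound $\sigma(\sigma(n)) \leq 3n$ is forced by $d \leq n$, and the lower bound $\sigma(\sigma(n)) \geq \sigma(n)$ is the trivial monotonicity of $\sigma$ at its argument. The only mild care needed is to make sure the case analysis actually covers both kinds of elements of $Q$ and that one does not accidentally try to use a stronger inequality like $\sigma(m) > m$ (which fails at $m=1$) without first handling $n=1$ separately, which is trivial since then $\sigma(n) = 1 \leq 3$.
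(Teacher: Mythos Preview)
Your proposal is correct and matches the paper's intended argument: the paper does not spell out a proof but simply points to ``the same sort of logic as the proof of Proposition~\ref{density zero},'' which is exactly your step $\sigma(\sigma(n))=2n\pm d\le 3n$, and your additional observation $\sigma(n)\le\sigma(\sigma(n))$ is the obvious missing link to pass from a bound on $\sigma(\sigma(n))$ to one on $\sigma(n)$. Your handling of the two cases in $Q$ and the caveat about $m=1$ are fine and more careful than what the paper writes.
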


The next two Lemmas are easy corollaries of this result. 

\begin{lemma} If $2|\sigma(n)$ and $\sigma(n) > 2n$, then $n \not \in Q$.
\end{lemma}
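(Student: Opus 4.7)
The plan is to prove the contrapositive by contradiction: assume $n \in Q$ with $\sigma(n)$ even and $\sigma(n) > 2n$, and derive an impossibility. The first step is to unpack what membership in $Q$ gives us. Whether $n$ is near superperfect or deficient superperfect, we have $\sigma(\sigma(n)) = 2n \pm d$ for some positive divisor $d$ of $n$; since $0 < d \leq n$, this forces $\sigma(\sigma(n)) \leq 3n$ in either case. (This is the same observation that underlies Lemma~\ref{sigma(n) at most 3n when n near super} and the proof of Proposition~\ref{density zero}.)

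Next, I would exploit the evenness of $\sigma(n)$. Write $\sigma(n) = 2m$, so that the hypothesis $\sigma(n) > 2n$ becomes $m > n$, and in particular $m \geq 2$. The key computational input is then the universal lower bound $\sigma(2m) \geq 3m$, which I would verify by a short case analysis based on the $2$-adic valuation of $m$. If $m$ is odd, multiplicativity of $\sigma$ gives $\sigma(2m) = 3\sigma(m) \geq 3m$. If $m = 2^k u$ with $u$ odd and $k \geq 1$, then $\sigma(2m) = (2^{k+2} - 1)\sigma(u) \geq (2^{k+2} - 1)u$, and $(2^{k+2}-1)/2^{k+1} = 2 - 2^{-(k+1)} > 3/2$. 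In either branch the inequality becomes strict as soon as $m \geq 2$.

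Combining the three ingredients yields $\sigma(\sigma(n)) = \sigma(2m) > 3m > 3n$, which contradicts $\sigma(\sigma(n)) \leq 3n$. This completes the argument.

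There is no real obstacle here; the proof amounts to combining the envelope $\sigma(\sigma(n)) \leq 3n$ satisfied by every element of $Q$ with the observation that doubling an already abundant divisor-sum overshoots that envelope. The only small care needed is to invoke the quantitative bound on $\sigma(\sigma(n))$ rather than the weaker bound on $\sigma(n)$ stated in Lemma~\ref{sigma(n) at most 3n when n near super}; both come from the same one-line estimate $d \leq n$.
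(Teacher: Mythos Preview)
Your proof is correct and follows essentially the same route as the paper: both combine the envelope $\sigma(\sigma(n)) \leq 3n$ for $n \in Q$ with the fact that an even $\sigma(n)$ forces $\sigma(\sigma(n)) \geq \tfrac{3}{2}\sigma(n) > 3n$. The paper simply asserts $\sigma(\sigma(n)) > \tfrac{3}{2}\sigma(n)$ in one line, whereas you unpack this via the substitution $\sigma(n)=2m$ and a short case split on the $2$-adic valuation of $m$; this extra detail is fine but not a genuinely different approach.
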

\begin{proof} Assume that $2|\sigma(n)$ and $\sigma(n) > 2n$ We have then $\sigma(\sigma(n)) > \frac{3}{2}\sigma(n) \geq \frac{3}{2}(2n) = 3n$.
\end{proof}

\begin{lemma} If $6|n$ and $n \in Q$ then $n=2^m a$ where $a$ is an odd square. 
\end{lemma}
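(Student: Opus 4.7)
My plan is to show that the hypotheses $n \in Q$ and $6 \mid n$ force $\sigma(n)$ to be odd; the parity characterization already invoked in the paper (that $\sigma(a)$ is odd exactly when $a$ is a power of $2$ times an odd square) then yields $n = 2^m a$ with $a$ an odd square immediately.

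First I would dispose of the edge case $n = 6$ by direct computation: $\sigma(\sigma(6)) = \sigma(12) = 28$ while $2\cdot 6 = 12$, so no positive divisor $d$ of $6$ can satisfy $12 \pm d = 28$; hence $6 \notin Q$ and we may assume $n > 6$.

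Next I would verify that $6 \mid n$ together with $n > 6$ forces $\sigma(n) > 2n$. Writing $n = 2^m 3^b k$ with $m, b \geq 1$ and $\gcd(k, 6) = 1$, multiplicativity of $\sigma$ gives
\begin{equation*}
\frac{\sigma(n)}{n} = \left(2 - \frac{1}{2^m}\right)\left(\frac{3}{2} - \frac{1}{2\cdot 3^b}\right)\frac{\sigma(k)}{k} \geq \frac{3}{2}\cdot\frac{4}{3}\cdot 1 = 2,
\end{equation*}
with equality only when $m = b = 1$ and $k = 1$, i.e.\ only when $n = 6$. So excluding that case yields $\sigma(n) > 2n$ strictly.

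With this in hand, the preceding lemma finishes the argument: if $\sigma(n)$ were even, then combined with $\sigma(n) > 2n$ we would conclude $n \notin Q$, a contradiction. Thus $\sigma(n)$ is odd, and the parity characterization gives the desired form. No step is substantial; the only mild subtlety is the need to handle $n = 6$ separately, since for that value $\sigma(n) = 2n$ exactly and the previous lemma's strict inequality hypothesis barely fails to apply.
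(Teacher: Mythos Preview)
Your proof is correct and follows exactly the route the paper implicitly intends: the paper states this lemma without proof as an ``easy corollary'' of the preceding lemma (if $2\mid\sigma(n)$ and $\sigma(n)>2n$ then $n\notin Q$), and your argument---showing $6\mid n$ forces $\sigma(n)\geq 2n$ with equality only at $n=6$, disposing of $n=6$ directly, and then invoking that lemma together with the parity characterization of $\sigma$---supplies precisely the omitted details.
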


We also have through similar logic:

\begin{lemma} Let $8||n$ and $n > 8$ then $n \not \in Q$. \label{no 8s lemma} 
\end{lemma}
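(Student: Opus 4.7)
The plan is to prove the stronger inequality $\sigma(\sigma(n)) > 3n$ whenever $8 \| n$ and $n > 8$, which suffices because any $n \in Q$ satisfies $\sigma(\sigma(n)) = 2n \pm d$ for some $d \mid n$ with $0 < d \leq n$, and hence $\sigma(\sigma(n)) \leq 3n$. (This is the viewpoint used implicitly in Lemma \ref{sigma(n) at most 3n when n near super} and Proposition \ref{density zero}.)

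Write $n = 8m$ with $m$ odd and $m \geq 3$. Since $\gcd(8,m) = 1$, multiplicativity gives $\sigma(n) = \sigma(8)\sigma(m) = 15\sigma(m)$. The crucial observation is that $15 = 3 \cdot 5$, so both $3$ and $5$ appear to positive powers in the prime factorization of $\sigma(n)$. Writing $\sigma(n) = 3^a 5^b u$ with $a,b \geq 1$ and $\gcd(u,15) = 1$, multiplicativity of $\sigma(x)/x$ yields
\[ \frac{\sigma(\sigma(n))}{\sigma(n)} \;=\; \frac{\sigma(3^a)}{3^a}\cdot\frac{\sigma(5^b)}{5^b}\cdot\frac{\sigma(u)}{u} \;\geq\; \frac{4}{3}\cdot\frac{6}{5}\cdot 1 \;=\; \frac{8}{5}, \]
using that $\sigma(p^k)/p^k = (p - p^{-k})/(p-1)$ is increasing in $k$ and thus minimized at $k=1$. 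Combining, $\sigma(\sigma(n)) \geq (8/5) \cdot 15\sigma(m) = 24\sigma(m)$, and since $m \geq 3 > 1$ forces $\sigma(m) \geq m+1$, we obtain $\sigma(\sigma(n)) \geq 24(m+1) > 24m = 3n$, as desired.

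The argument is essentially bookkeeping with the multiplicativity of $\sigma$, so no step poses a real obstacle. The one delicate point is that the slack $\sigma(m) > m$ is exactly what drives the strict inequality, so the bound collapses at $m = 1$; this matches reality, since $n = 8$ is itself near superperfect (with omitted divisor $d = 8$, as noted by Kalita and Saikia), and explains the hypothesis $n > 8$ in the statement.
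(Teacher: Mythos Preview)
Your proof is correct and follows essentially the same approach as the paper: both observe that $8\|n$ forces $15\mid\sigma(n)$ and then deduce $\sigma(\sigma(n))\geq 3n$ with equality only at $n=8$. The paper leaves the verification of the latter inequality to the reader, whereas you spell it out via the multiplicative bound $\sigma(\sigma(n))/\sigma(n)\geq(4/3)(6/5)=8/5$ together with $\sigma(m)\geq m+1$ for $m>1$.
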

\begin{proof} If $8||n$, then $\sigma(8)=15|\sigma(n)$ and so one can check that $\sigma(\sigma(n)) \geq 3n$ with  with equality if and only $n=8.$
\end{proof}

In this case, $8$ is an exception because $8$ is near superperfect with $d=8$,  the largest possible option of the divisors of 8. Curiously, the same thing happens for $n=512$, where the divisor needed to make $512$ superperfect is also $d=n$. However, divisibility by $512$ is rare enough that checking for $2^9||n$ is not likely to be a substantial time saver for algorithmic purposes. 

We also have the following through similar logic. 
\begin{lemma} If $3||n$ and $p||n$ for some odd prime $p$ where $p \equiv 2$ (mod 3), then $n \not \in Q $.     
\end{lemma}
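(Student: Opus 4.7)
The plan is to show $\sigma(\sigma(n)) > 3n$, which suffices by the template used in the preceding lemmas: if $n \in Q$, then $\sigma(\sigma(n)) = 2n \pm d$ for some positive divisor $d \leq n$ of $n$, which forces $\sigma(\sigma(n)) \leq 3n$.

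First I would extract divisibility information about $\sigma(n)$. Since $3||n$, the factor $\sigma(3) = 4$ divides $\sigma(n)$. Since $p||n$ with $p \equiv 2 \pmod 3$, we have $\sigma(p) = p+1$ dividing $\sigma(n)$, and $3 \mid p+1$. Because $\gcd(4,3)=1$, it follows that $12 \mid \sigma(n)$. Next, I would invoke the standard monotonicity of the abundancy index $h(m) = \sigma(m)/m$ along divisors, namely that $a \mid b$ implies $h(a) \leq h(b)$ (since $h$ is multiplicative and $h(p^k)$ is increasing in $k$). Applied to $12 \mid \sigma(n)$ and $3p \mid n$, this gives
\[
\frac{\sigma(\sigma(n))}{n} \;=\; \frac{\sigma(\sigma(n))}{\sigma(n)}\cdot\frac{\sigma(n)}{n} \;\geq\; h(12)\, h(3p) \;=\; \frac{7}{3}\cdot\frac{4(p+1)}{3p} \;=\; \frac{28(p+1)}{9p}.
\]
A one-line check shows $28(p+1)/(9p) > 3$ for every positive $p$ (it reduces to $p > -28$), so in particular $\sigma(\sigma(n)) > 3n$ and hence $n \notin Q$.

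I do not anticipate any real obstacle; the argument is a direct analogue of the preceding lemmas in this section (the $2 \mid \sigma(n)$ lemma and the $8||n$ lemma), with the only new ingredient being the observation that an odd prime $p \equiv 2 \pmod 3$ contributes the extra factor of $3$ needed to upgrade $4 \mid \sigma(n)$ to $12 \mid \sigma(n)$.
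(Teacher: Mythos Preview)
Your proof is correct and follows exactly the approach the paper intends: the paper gives no explicit proof here, saying only that the lemma follows ``through similar logic'' to the preceding ones (in particular Lemma~\ref{no 8s lemma}), and your argument---extract $4\mid\sigma(n)$ from $3\|n$, extract $3\mid\sigma(n)$ from $p\|n$ with $p\equiv 2\pmod 3$, combine to get $12\mid\sigma(n)$, then bound $h(\sigma(n))\,h(n)\geq h(12)\,h(3p)>3$---is precisely that logic made explicit.
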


Note that we may slightly tighten Lemma \ref{sigma(n) at most 3n when n near super} although the improvement is not enough to make a major practical difference from an algorithmic standpoint.

\begin{lemma} If $n$ is near superperfect, then $\sigma(n) \leq 3n - \frac{\sqrt{12n+1}-1}{2}$.
\end{lemma}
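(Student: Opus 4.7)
The plan is to reformulate the claimed inequality into a cleaner form and then derive it from a sharper lower bound on $\sigma(\sigma(n))$.

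First, I would reformulate the bound. Set $u = 3n - \sigma(n)$. The inequality $u \geq \frac{\sqrt{12n+1}-1}{2}$ is equivalent (after multiplying by $2$, adding $1$, and squaring) to $u(u+1) \geq 3n$. Since $u + \sigma(n) = 3n$ by definition, this simplifies to $u^2 \geq \sigma(n)$, i.e., $3n - \sigma(n) \geq \sqrt{\sigma(n)}$. So it suffices to prove $\sigma(n) + \sqrt{\sigma(n)} \leq 3n$.

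Write $s = \sigma(n)$. From Lemma \ref{sigma(n) at most 3n when n near super} we already know $\sigma(s) \leq 3n$. The key step is a matching lower bound $\sigma(s) \geq s + \sqrt{s} + 1$, which holds whenever $s$ is composite: if $p$ is the smallest prime divisor of $s$, then $p \leq \sqrt{s}$, so $s/p \geq \sqrt{s}$, and $1$, $s/p$, and $s$ are three distinct divisors of $s$. Combining the two bounds gives $s + \sqrt{s} + 1 \leq 3n$, which is slightly stronger than the reformulated inequality.

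The one subtlety, and the step I would treat most carefully, is that the lower bound $\sigma(s) \geq s + \sqrt{s} + 1$ fails when $s$ is prime (giving only $\sigma(s) = s + 1$). To rule out $s$ prime, I would argue as follows: since $\sigma$ is multiplicative and $\sigma(m) \geq 2$ for $m \geq 2$, primality of $s = \sigma(n)$ forces $n$ to be a prime power $p^a$. But any prime power satisfies $\sigma(p^a)/p^a < p/(p-1) \leq 2$, so $s < 2n$, and hence $\sigma(s) = s + 1 \leq 2n < 2n + d$, contradicting the defining equation of a near superperfect number. Thus $s$ must be composite, and the composite case above finishes the argument.
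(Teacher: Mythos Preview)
Your proof is correct and shares its core mechanism with the paper's: both reduce the claimed bound to $\sigma(n)+\sqrt{\sigma(n)}\leq 3n$, obtain this from the pair of inequalities $\sigma(\sigma(n))\geq \sigma(n)+\sqrt{\sigma(n)}$ (valid when $\sigma(n)$ is composite) and $\sigma(\sigma(n))\leq 3n$, and then deal separately with the possibility that $\sigma(n)$ is prime. Where you differ is in this last step. The paper splits on the parity of $n$: for even $n$ it checks powers of $2$ directly and otherwise notes that two distinct prime factors force $\sigma(n)$ composite; for odd $n$ it invokes the earlier structural fact that $\sigma(n)$ must be of the form $2^k m^2$. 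You instead argue uniformly that $\sigma(n)$ prime forces $n$ to be a prime power, whence $\sigma(\sigma(n))=\sigma(n)+1\leq 2n<2n+d$, a contradiction. Your treatment is a bit more streamlined and does not rely on the odd-case structure result. One small wrinkle: the lemma you cite asserts $\sigma(n)\leq 3n$, whereas what you actually use is $\sigma(\sigma(n))\leq 3n$; the latter is immediate from the defining equation $\sigma(\sigma(n))=2n+d$ with $d\mid n$, so this is only a labeling issue.
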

\begin{proof} 
First, assume $n$ is even. We may assume that $n$ is not a power of 2, since if $n=2^k$,
then $$\sigma(n)= 2n-1 < 3n - \frac{\sqrt{12n+1}-1}{2}.$$  Therefore, $n$ must have at least two distinct prime factors, and so $\sigma(n)$ must be composite. In particular, $\sigma(n)$ must have a factor which is at least its square root.
Thus $\sigma(\sigma(n)) \geq \sigma(n) + \sqrt{\sigma(n)}.$

Now, assume $n$ is odd. Then by earlier remarks, we must have $\sigma(n)$ is a power of 2 times a perfect square, and $\sigma(n)$ is greater than 1. Thus, $\sigma(n)$ is composite. So whether $n$ is odd or even, we must have $\sigma(\sigma(n)) \geq \sigma(n) + \sqrt{\sigma(n)}.$

Thus, in all cases we must have $\sigma(\sigma(n)) \geq \sigma(n) + \sqrt{\sigma(n)}.$

One now need to only verify that if $\sigma(n) > 3n - \frac{\sqrt{12n+1}-1}{2}$, then $\sigma(n) + \sqrt{\sigma(n)} > 3n.$
\end{proof}

Our final restriction of this sort is the following. 
\begin{lemma} If $2|n$ and $5||n$ then $n \not \in Q$. 
\end{lemma}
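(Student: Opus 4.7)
The plan is to show that the hypotheses force $\sigma(\sigma(n)) > 3n$, which immediately rules out $n \in Q$: any $n \in Q$ satisfies $\sigma(\sigma(n)) = 2n \pm d$ for some positive divisor $d \leq n$, so $\sigma(\sigma(n)) \leq 3n$. So proving the strict inequality $\sigma(\sigma(n)) > 3n$ kills both the near superperfect and the deficient superperfect cases at once.

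First I would write $n = 2^a \cdot 5 \cdot m$ with $a \geq 1$ and $\gcd(m,10)=1$. By multiplicativity, $\sigma(n) = (2^{a+1}-1) \cdot 6 \cdot \sigma(m)$, so in particular $6 \mid \sigma(n)$. The heart of the argument is then a standalone bound: whenever $6 \mid M$, one has $\sigma(M) \geq 2M$. Writing $M = 2^b 3^c t$ with $\gcd(t,6)=1$ and $b,c \geq 1$, multiplicativity of $\sigma(x)/x$ yields
$$\frac{\sigma(M)}{M} \;=\; \frac{2^{b+1}-1}{2^b} \cdot \frac{3^{c+1}-1}{2 \cdot 3^c} \cdot \frac{\sigma(t)}{t} \;\geq\; \frac{3}{2} \cdot \frac{4}{3} \cdot 1 \;=\; 2,$$
since each of the three factors is minimized at $b=1$, $c=1$, $t=1$ respectively. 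Applying this with $M = \sigma(n)$ gives $\sigma(\sigma(n)) \geq 2\sigma(n) \geq 12(2^{a+1}-1)\sigma(m) \geq 12(2^{a+1}-1) m$.

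Finally I would compare with $3n = 15 \cdot 2^a \cdot m$. The inequality $12(2^{a+1}-1) > 15 \cdot 2^a$ rearranges to $9 \cdot 2^a > 12$, which holds for every $a \geq 1$. Therefore $\sigma(\sigma(n)) > 3n$ and $n \not\in Q$.

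I do not anticipate any substantive obstacle; the argument is a variation on the proof of Proposition \ref{density zero} and the earlier divisibility-based lemmas in this section. The only thing to be careful about is ensuring that both a factor of $2$ and a factor of $3$ appear in $\sigma(n)$, so that the multiplicative estimate $\sigma(M)/M \geq 2$ activates on both primes — this is automatic here because the single factor of $5$ contributes $\sigma(5)=6$.
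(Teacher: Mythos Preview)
Your argument is correct. The paper does not actually supply a proof of this lemma; it merely states that the result follows ``through similar logic'' to the preceding divisibility lemmas (e.g.\ the $8\|n$ lemma, whose proof observes that $\sigma(8)=15\mid\sigma(n)$ forces $\sigma(\sigma(n))\geq 3n$). Your proof fills in precisely those details: $5\|n$ contributes $\sigma(5)=6\mid\sigma(n)$, the standard bound $\sigma(M)/M\geq 2$ for $6\mid M$ gives $\sigma(\sigma(n))\geq 2\sigma(n)$, and the factor $2^a$ with $a\geq 1$ pushes $2\sigma(n)$ strictly above $3n$. This is exactly the template the paper uses elsewhere in the section, so your approach matches the paper's intended one.
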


Incorporating the results from this section into a straightforward search algorithm allows us to find that under 113,615,306,752  the only near superperfect numbers are 8, 21, 512, and Mersenne primes. The code  is written in Julia and available on Github \cite{SBeri}.

\section{Type II near superperfect numbers}

The definition of near superperfect is that $2n+d = \sigma(\sigma(n))$. However, there is reasonable objection to this being the ``correct'' definition, namely there is a type mismatch here. Since a number $n$ is superperfect if $2n = \sigma(\sigma(n))$, failing to be superperfect is a failure of the divisors of $\sigma(n)$ rather than those of of $n$. Thus, arguably a slightly more natural set to examine are numbers $n$ where $2n+d= \sigma(\sigma(n))$ and $d$ is a positive divisor of $\sigma(n)$. We will call such a number a \textit{Type II near superperfect number}, or just \textit{Type II} for short. In this section we will use the term Type I to refer to near superperfect numbers as originally defined by Kalita and Saikia. 

Type II numbers have some similarity to Type I. All Mersenne primes are also Type II for the trivial reason that for such numbers one always has $d=1$.

Unfortunately, many of the statements we can prove about Type I numbers do not apply to Type II. There are two particular failures of note. First, in the case of odd $n$, there is no obvious need for a Type II to have an odd value for $\sigma(\sigma(n))$ since $\sigma(n)$ may have even or odd divisors. Thus, we lack any obvious strong restriction on the factorization of $\sigma(n)$. Second, there does not seem to be any obvious way to force any upper bound on $\sigma(n)$ itself when $n$ is of Type II, since $d$ is no longer bounded from above by $n$. 

Consistent with these two failures to restrict Type II,  Type II numbers do seem to be more common. The first few Type II examples are:
$$3, 5, 7, 11, 19, 31, 41, 45, 103, 127, 271, 293, 463, 811, 1591, 1951, 8191, 93673.$$
This sequence does not appear in the OEIS.  It appears that all Type II numbers are odd. 

\begin{question} Are all type II numbers odd?
\end{question}

As with the search for near superperfect numbers, a search for Type II also naturally finds numbers where $d$ is negative, which we will call \textit{Type II deficient superperfect numbers}.

The first few Type II deficient superperfect numbers are
$$1, 13, 43, 109, 151, 181, 883, 1825, 2143, 2311, 20191, 30757, 268129, 321469. $$

This sequence is also not in the OEIS. Again, it seems that all of these are odd.

The sequence which is their union, $$1, 3, 5, 7, 11, 13, 19, 31, 41, 43, 45, 103, 127, 151, 181, 271 \cdots $$  is also not in the OEIS at this time.

We can at least prove that the natural density of the set of Type II near superperfect numbers and Type II deficient superperfect numbers is zero.

\begin{proposition} Let $C$ be the set of integers $n$ such that $2n+d = \sigma(\sigma(n)) $ for some $d$ which divides $\sigma(n)$. Then $C$ has natural density $0$. \label{stronger density zero claim} 
\end{proposition}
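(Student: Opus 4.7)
The approach is to mimic the proof of Proposition \ref{density zero}, with the Type I bound $d \le n$ replaced by the weaker Type II bound $|d|\le\sigma(n)$. The key observation is that membership in $C$ still forces $\sigma(\sigma(n))$ to be bounded by a fixed multiple of $\sigma(n)$, and Lemma \ref{Arb large h(sigma(n)) on set of density 1} already says that this bound fails on a set of density $1$.

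First, I would unpack the defining equation. If $n\in C$ and $d$ is an integer whose absolute value divides $\sigma(n)$, then $|d|\le\sigma(n)$, and hence
\[
\sigma(\sigma(n)) \;=\; 2n+d \;\le\; 2n+|d| \;\le\; 2n+\sigma(n) \;\le\; 3\sigma(n),
\]
where the last inequality uses the trivial fact $n\le\sigma(n)$. Consequently,
\[
C \;\subseteq\; \bigl\{\,n\in\mathbb{Z}_{>0}:\sigma(\sigma(n))\le 3\sigma(n)\,\bigr\}.
\]

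Second, I would apply Lemma \ref{Arb large h(sigma(n)) on set of density 1} with the constant taken to be $3$. That lemma asserts that the set of $n$ satisfying $\sigma(\sigma(n))>3\sigma(n)$ has natural density $1$, so its complement has density $0$, and therefore so does $C$.

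There is essentially no obstacle beyond noticing that the right comparison is with $\sigma(n)$ rather than $n$: the Type II hypothesis only loosens the Type I inequality $\sigma(\sigma(n))\le 3n$ to $\sigma(\sigma(n))\le 3\sigma(n)$, and Lemma \ref{Arb large h(sigma(n)) on set of density 1} is already strong enough to accommodate this weakening, since on a density-one set $\sigma(\sigma(n))$ outgrows \emph{every} fixed constant multiple of $\sigma(n)$. The same proof simultaneously handles the Type II deficient case (where one would interpret $d$ as negative with $|d|\mid\sigma(n)$), because the bound $|d|\le\sigma(n)$ is symmetric in the sign of $d$.
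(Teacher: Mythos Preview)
Your proof is correct and follows essentially the same route as the paper: bound $d$ by $\sigma(n)$ to obtain $\sigma(\sigma(n))\le 2n+\sigma(n)\le 3\sigma(n)$ for every $n\in C$, then invoke Lemma \ref{Arb large h(sigma(n)) on set of density 1} with constant $3$ to conclude that the complement of $C$ has density $1$. Your explicit handling of the sign of $d$ is a small bonus, since the paper intends the proposition to cover both the Type II near superperfect and Type II deficient superperfect cases.
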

\begin{proof} The reasoning is similar to the proof of Proposition \ref{density zero}. The largest divisor of $\sigma(n)$ is $\sigma(n)$. Thus, if $\sigma(\sigma(n)) > \sigma(n) + 2n$, then $n$ cannot be an element of $C$. This will happen if $\sigma(\sigma(n)) > 3\sigma(n)$. The rest of the proof then proceeds identically.    
\end{proof}

%(Add near superperfects  to OEIS, along with Kalita-Saikia numbers if second not already there.)

%then   send this to Bulletin of the Australian Mathematical Society. See https://mathscinet.ams.org/mathscinet/article?mr=4538573 for other relevant references,

Given the generalizations for $k$-near perfect, a similar generalization to $k$-near superperfect or $k$-near superperfect of Type II is straightforward. The same arguments as before show that for any fixed $k$, these sets are of natural density zero. We propose the following additional generalization.

We will say a number is $a$-$b$ \textit{hybrid superperfect} if there exist distinct but not necessarily positive divisors of $n$, $x_1,x_2, \cdots x_a$  and distinct but not necessarily positive  divisors $y_1, y_2 \cdots y_b$ of  $\sigma(n)$ such that 
\begin{equation} 2n + x_1 +x_2 +\cdots +x_a + +y_1 + y_2 \cdots + y_b = \sigma(\sigma(n)),    
\end{equation}
for some choice of signs. Given non-negative integers $a$ abd $b$ we will write $S_{a,b}$ as the set of $a$-$b$ hybrid numbers. Then using the similar logic as in the proof of Proposition \ref{stronger density zero claim} one can show that $S_{a,b}$ has density zero for any fixed $a$ and $b$. In contrast, it seems plausible that the union of all the $S_{a,b}$ has positive density.

\end{document}